\tikzset{curve/.style={settings={#1},to path={(\tikztostart)
    .. controls ($(\tikztostart)!\pv{pos}!(\tikztotarget)!\pv{height}!270:(\tikztotarget)$)
    and ($(\tikztostart)!1-\pv{pos}!(\tikztotarget)!\pv{height}!270:(\tikztotarget)$)
    .. (\tikztotarget)\tikztonodes}},
    settings/.code={\tikzset{quiver/.cd,#1}
        \def\pv##1{\pgfkeysvalueof{/tikz/quiver/##1}}},
    quiver/.cd,pos/.initial=0.35,height/.initial=0}
\tikzset{tail reversed/.code={\pgfsetarrowsstart{tikzcd to}}}
\tikzset{2tail/.code={\pgfsetarrowsstart{Implies[reversed]}}}
\tikzset{2tail reversed/.code={\pgfsetarrowsstart{Implies}}}
\tikzset{no body/.style={/tikz/dash pattern=on 0 off 1mm}}
\newcommand{\C}{\mathbb{C}}
\newcommand{\D}{\mathbb{D}}
\DeclareMathOperator{\ob}{ob}
\DeclareMathOperator{\Cat}{Cat}
\DeclareMathOperator{\CAT}{CAT}
\DeclareMathOperator{\Psh}{Psh}
\DeclareMathOperator{\Set}{Set}
\DeclareMathOperator{\Lan}{Lan}
\newtheorem{proposition}{Proposition}[section]
\newtheorem{lemma}[proposition]{Lemma}
\newtheorem{theorem}[proposition]{Theorem}
\newtheorem*{theorem*}{Theorem}
\theoremstyle{definition}
\newtheorem{definition}[proposition]{Definition}
\newtheorem{remark}[proposition]{Remark}
\newtheorem{example}[proposition]{Example}
\title[Pseudocommutativity and Lax Idempotency]{Pseudocommutativity and Lax Idempotency\\for Relative Pseudomonads}
\author[A. Slattery]{Andrew Slattery}
\address{School of Mathematics, University of Leeds}
\email{mmawsl@leeds.ac.uk}
\subjclass{Primary 18N15; Secondary 18D65, 18A05, 18M65}
\date{\today}
\begin{document}
\maketitle

\begin{abstract}
    We extend the classical work of Kock on strong and commutative monads, as well as the work of Hyland and Power for 2-monads, in order to define strong and pseudocommutative relative pseudomonads. To achieve this, we work in the more general setting of 2-multicategories rather than monoidal 2-categories. We prove analogous implications to the classical work: that a strong relative pseudomonad is a pseudo-multifunctor, and that a pseudocommutative relative pseudomonad is a multicategorical pseudomonad. Furthermore, we extend the work of López Franco with a proof that a lax-idempotent strong relative pseudomonad is pseudocommutative.

    We apply the results of this paper to the example of the presheaf relative pseudomonad.
\end{abstract}

\section{Introduction}

\noindent\textbf{Context and motivation.} The classical theory of monads provides a framework with which to study algebraic structures on objects of a category. A landmark in this field is Kock's theory of commutative monads \cite{kock1970}, developed in the setting of symmetric monoidal categories. The basic notion in this theory is that of a \emph{strong monad}, which comprises a monad on a symmetric monoidal category equipped with a natural transformation with components
\[t_{X,Y} : X \otimes TY \to T(X \otimes Y),\] called the \emph{strength}. The underlying endofunctor of a strong monad is a lax monoidal functor, and the monad unit is a monoidal natural transformation. Furthermore, Kock showed that the monad is \emph{commutative} (a property of a given strength) if and only if the monad is a monoidal monad, which is to say that the monad multiplication is monoidal.
Some nice properties follow when this happens. For example, if a symmetric monoidal category $\C$ has a closed structure and $T$ is a commutative monad on $\C$, then the closed structure gives rise to one on Eilenberg-Moore category of $T$-algebras.

Two-dimensional monad theory \cite{bkp1989} has traditionally studied the strict notion of a 2-monad, along with their algebras and lax, pseudo-, and strict algebra morphisms. Kelly \cite{kelly74} and Hyland \& Power \cite{hp2002} extended Kock's theory to 2-monads, defining \emph{pseudocommutative 2-monads}. Some aspects of the theory become more subtle; for example, one must distinguish between braiding and symmetry, and between closed structures and pseudo-closed structures. In this setting, an important result is López Franco's theorem \cite{lopezfranco2011} that a lax-idempotent pseudomonad is pseudocommutative (extending work of Power, Cattani and Winskel in \cite{pcw98}).

For some applications, it is useful to consider the notions of a pseudomonad \cite{bunge74,ng21,lack00,marmolejo99}, in which the axioms for a 2-monad hold only up to coherent isomorphisms, and of a relative pseudomonad \cite{fghw}, in which one further abandons the requirement of having an underlying endofunctor. The latter can be seen as a 2-categorical counterpart of the notion of a relative monad \cite{acu2015,arkormcdermott2023,Lobbia2023}.

The aim of this paper is to provide an analogue of the theory of Hyland \& Power and of López Franco for relative pseudomonads. We are motivated to do so by the presheaf construction; here, pseudocommutativity and lax idempotency are particularly intuitive and correspond to important properties of the presheaf construction. As a byproduct of the work in this paper, we obtain a theory of commutativity for relative monads. We also expect a close relationship between the work in this paper and that on strength for pseudomonads in \cite{paquetsaville23}, which have been developed independently.

\noindent\textbf{Main contributions.} We are naturally led to work in a multicategorical setting, as was already partially done by Hyland \& Power in \cite{hp2002}. This step is unavoidable if we wish to avoid dealing with associator and unitor coherences while still having our work apply directly to the 2-categories $\Cat$ and $\CAT$ of small and locally-small categories. Multicategories subsume monoidal categories, with monoidal categories corresponding to the subclass of `representable multicategories', as laid out by Hermida in \cite{hermida2000}. Thus we work in general with $n$-ary maps $f : X_1,...,X_n \to Y$, and our definitions reflect this.

We define the notion of strong relative pseudomonad (Definition~\ref{paramrelpsmonad}), and prove that for  a strong relative pseudomonad, the underlying pseudofunctor becomes a multi-pseudofunctor (Proposition~\ref{param=>psfunctor}) and the unit becomes multicategorical (part of Theorem~\ref{pscom=>multmonad}). We also define the notion of pseudocommutative relative pseudomonad, which in our setting is particularly appealing; it amounts to asking for an isomorphism
\[(f^t)^s \cong (f^s)^t.\] We then prove that every pseudocommutative relative pseudomonad is a multicategorical relative pseudomonad (Theorem~\ref{pscom=>multmonad}). We define the notion of lax idempotency for strong relative pseudomonads, extending earlier definitions in \cite{diliberti2023kzpseudomonads} and \cite{fghw}, and prove that every lax-idempotent strong relative pseudomonad is pseudocommutative (Theorem~\ref{laxid=>pscom}). We apply these definitions and results to the example of presheaves (Theorem~\ref{psh is all the things}).

\noindent\textbf{Roadblocks and technical challenges.} As with any venture at generalisation, we lose some implications and equivalences. For example, while Kock \cite{kock1970} proves an equivalence between strong monads and monads which are lax monoidal as functors, as well as one between commutative and monoidal monads, in our setting we will only have implications in the forward direction. Another assumption we must drop if we are to apply our results to the presheaf construction is that of closure; while $\Cat$ is closed, $\CAT$ is not (again due to size issues—the functor category $[X,Y]$ need not be locally small even when both $X$ and $Y$ are). This means in particular that the proof that lax-idempotent pseudomonads are pseudocommutative given by López Franco in \cite{lopezfranco2011} cannot be readily transported to our setting, as it makes heavy use of closure.

Other trade-offs come from working in the setting of multicategories. The classical strength employs a binary map $X \otimes TY \to T(X \otimes Y)$; we will need an $n$-ary formulation in order to extend to the notion of strength to the multicategorical setting. In general, we are able to obviate associativity and unitor coherences, at the expense of having to work in an unbiased way on general n-ary morphisms, instead of being able to consider only binary and nullary morphisms.

\noindent\textbf{Organisation of the paper.} Section 2 reviews the definition of a relative pseudomonad and some immediate results, and introduces the example of the presheaf relative pseudomonad. Our new work begins in section 3, in which we introduce the setting of 2-multicategories, define a notion of relative pseudomonad suitable for this setting (strong relative pseudomonad) and prove that every strong relative pseudomonad is a pseudo-multifunctor. In section 4 we focus on the class of strong relative pseudomonads which are pseudocommutative, and prove that every pseudocommutative relative pseudomonad is a multicategorical relative pseudomonad. Section 5 discusses a particularly nice class of strong relative pseudomonads, the lax-idempotent strong relative pseudomonads, and proves that every lax-idempotent strong relative pseudomonad is pseudocommutative. We close in Section 6 by applying our results to the case of $\Psh$, the presheaf relative pseudomonad.

\section{Background}

We recall the definition of a relative pseudomonad from \cite{fghw}; for our purposes it will suffice to consider relative pseudomonads along a fixed 2-functor $J : \D \to \C$ between 2-categories $\C$ and $\D$ (as opposed to a pseudofunctor between bicategories).

\begin{definition}
    (Relative pseudomonad) Let $\C,\D$ be 2-categories and let $J : \D \to \C$ be a 2-functor. A \emph{relative pseudomonad} $(T,i,{}^*;\eta,\mu,\theta)$ along $J$ comprises
    \begin{itemize}
        \item for $X \in \ob\D$ an object $TX \in \ob\C$ and map $i_X : JX \to TX$ (called a \emph{unit map}), and
        \item for $X,Y \in \ob\D$ a functor \[\C(JX,TY) \xrightarrow{(-)^*} \C(TX,TY)\] (called an \emph{extension functor}).
    \end{itemize}
    The units and extensions furthermore come equipped with three invertible families of 2-cells
    \begin{itemize}
        \item $\eta_f : f \to f^*i_X$ for $f : JX \to TY$,
        \item $\mu_{f,g} : (f^*g)^* \to f^*g^*$ for $g : JX \to TY$, $f : JY \to TZ$, and
        \item $\theta_X : (i_X)^* \to 1_{TX}$ for $X \in \ob\D$,
    \end{itemize} satisfying the following two coherence conditions:

    \begin{enumerate}
        \item[(1)] for every $f : JX \to TY$, $g : JW \to TX$ and $h : JV \to TW$ the diagram
\[\begin{tikzcd}[ampersand replacement=\&]
	{((f^*g)^*h)^*} \&\& {(f^*g)^*h^*} \\
	{(f^*g^*h)^*} \& {f^*(g^*h)^*} \& {f^*g^*h^*}
	\arrow["{(\mu_{f,g}h)^*}"', from=1-1, to=2-1]
	\arrow["{\mu_{f^*g,h}}", from=1-1, to=1-3]
	\arrow["{\mu_{f,g^*h}}"', from=2-1, to=2-2]
	\arrow["{f^*\mu_{g,h}}"', from=2-2, to=2-3]
	\arrow["{\mu_{f,g}h^*}", from=1-3, to=2-3]
\end{tikzcd}\] commutes (the \emph{associativity axiom}), and
        \item[(2)] for every $f : JX \to TY$ the diagram
\[\begin{tikzcd}[ampersand replacement=\&]
	{f^*} \& {(f^*i)^*} \& {f^*i^*} \\
	\&\& {f^*1}
	\arrow[Rightarrow, no head, from=1-1, to=2-3]
	\arrow["{(\eta_f)^*}", from=1-1, to=1-2]
	\arrow["{\mu_{f,i}}", from=1-2, to=1-3]
	\arrow["{f^*\theta}", from=1-3, to=2-3]
\end{tikzcd}\] commutes (the \emph{unit axiom}).
    \end{enumerate}
\end{definition}
We usually omit subscripts from the unit maps $i : JX \to TX$; we will also refer to a given relative pseudomonad $(T,i,{}^*;\eta,\mu,\theta)$ simply as $(T,i,{}^*)$ or $T$, with the rest of the structure inferred.

Given a relative pseudomonad $T$ along $J : \D \to \C$, the function $ \ob\D \to \ob C : X \mapsto TX$ can be given the structure of a pseudofunctor, with functors between hom-categories given by \[\D(X,Y) \to \C(TX,TY) : f \mapsto (i_Y \circ Jf)^*.\]

\begin{remark}
    A relative pseudomonad along the identity $1 : \C \to \C$ induces and is induced by an ordinary pseudomonad with the same action on objects (see \cite{fghw} Remark~4.5).
\end{remark}

We can infer more equalities between a relative pseudomonad's structural 2-cells. The following lemma is from \cite{fghw}; the proof is analogous to the proof that three of the original five axioms for a monoidal category are redundant \cite{kelly1964}, which also has a version for (ordinary) pseudomonads \cite{marm1997}.

\begin{lemma}\label{3moreeqs}
    Let $T$ be a relative pseudomonad along $J : \D \to \C$. Then in addition to the two equalities of 2-cells given by definition, the following three diagrams also commute:
    \begin{enumerate}
        \item[(1)] for every $f : JX \to TY$ and $g : JW \to TX$, the diagram
\[\begin{tikzcd}[ampersand replacement=\&]
	{f^*g} \& {(f^*g)^*i} \\
	\& {f^*g^*i}
	\arrow["{\eta_{f^*g}}", from=1-1, to=1-2]
	\arrow["{\mu_{f,g}i}", from=1-2, to=2-2]
	\arrow["{f^*\eta_g}"', from=1-1, to=2-2]
\end{tikzcd}\] commutes.
        \item[(2)] for every $f : JX \to TY$, the diagram
\[\begin{tikzcd}[ampersand replacement=\&]
	{(i^*f)^*} \& {i^*f^*} \\
	\& {f^*}
	\arrow["{\theta f^*}", from=1-2, to=2-2]
	\arrow["{(\theta f)^*}"', from=1-1, to=2-2]
	\arrow["{\mu_{i,f}}", from=1-1, to=1-2]
\end{tikzcd}\] commutes, and
        \item[(3)] for every object $X \in \ob\D$, the diagram
\[\begin{tikzcd}[ampersand replacement=\&]
	i \& {i^*i} \\
	\& i
	\arrow["{\eta_i}", from=1-1, to=1-2]
	\arrow["{\theta i}", from=1-2, to=2-2]
	\arrow[Rightarrow, no head, from=1-1, to=2-2]
\end{tikzcd}\] commutes.
    \end{enumerate}
\end{lemma}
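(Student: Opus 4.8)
The three equations are the relative-pseudomonad analogues of the coherence axioms that Kelly showed to be redundant for monoidal categories, and the plan is to deduce them from the associativity and unit axioms by a $2$-cell diagram chase, using only naturality of the structural families $\eta,\mu,\theta$ (which follows from functoriality of $(-)^*$ on $2$-cells together with whiskering) and their invertibility. The single device that makes every cancellation legitimate is the following \emph{reflection principle}: for each $X,Y$ the extension functor $(-)^* : \C(JX,TY) \to \C(TX,TY)$ is faithful on $2$-cells. Indeed, $\eta$ exhibits a natural isomorphism between the identity on $\C(JX,TY)$ and the composite of $(-)^*$ with precomposition by $i_X$, so for parallel $2$-cells $\alpha,\beta : h \to h'$ the naturality square $\eta_{h'} \circ \alpha = (\alpha^* i)\circ \eta_h$ together with invertibility of $\eta_h,\eta_{h'}$ gives $\alpha = \eta_{h'}^{-1}\circ(\alpha^* i)\circ\eta_h$; hence $\alpha^* = \beta^*$ forces $\alpha = \beta$.

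With this in hand I would prove the three statements in the order (2), (1), (3). Equation (2) is the ``left unit law'', and I would obtain it exactly as Kelly derives the left unit law from the right unit law and the pentagon: instantiate the associativity axiom with its outer argument equal to a unit map $i$, then use the unit axiom (the ``right unit law'') and naturality of $\mu$ to rewrite the faces of the pentagon that involve a unit-whiskering in terms of $\theta$, and cancel the resulting invertible $2$-cells. For equation (1), note that both of its sides are $2$-cells between maps $JW \to TY$, so by the reflection principle it suffices to prove the equation after applying $(-)^*$; the image is then assembled directly from the associativity axiom specialised at $h = i$ together with the unit axiom and naturality of $\mu$ in its second argument. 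Finally, for equation (3) I would specialise the unit axiom and equation (2) at $f = i$: eliminating $\mu_{i,i}$ between them and simplifying the whiskered composite $(\theta i^*)\circ(i^*\theta^{-1})$, which equals $1_{i^*}$ by interchange, yields $((\theta i)\circ\eta_i)^* = 1_{i^*} = (1_i)^*$, and the reflection principle then gives $(\theta i)\circ\eta_i = 1_i$.

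The main obstacle is precisely the final cancellation at each stage. The naive impulse is to ``cancel the $\mu$'s and the whiskered copies of $i^*$'', but neither $(-)^*$ nor whiskering by $i_X$ is faithful in general, so such cancellations are not licensed on their own; the role of the reflection principle is to reinstate exactly the cancellation that is needed, and the real work is to organise the pentagon and triangle bookkeeping so that each target equation is reduced either to an identity among invertibles or to an equation in the image of $(-)^*$. I expect the fiddliest computation to be the left unit law (2), where several unit-whiskered faces of the pentagon must be rewritten simultaneously using the triangle and the naturality of $\mu$ before anything cancels.
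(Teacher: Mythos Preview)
Your proposal is correct and follows the same Kelly-style strategy that the paper invokes; note that the paper itself gives no proof beyond citing \cite{fghw} and remarking that the argument is analogous to Kelly's reduction of the monoidal-category axioms \cite{kelly1964}. Your reflection principle (that $(-)^*$ is faithful on $2$-cells because $\eta$ is a natural isomorphism onto $(-)^*i$) is a clean device for the final cancellations in (1) and (3); it is not part of Kelly's original monoidal-category argument, where no such faithfulness is available, but it is exactly the right substitute in the relative-pseudomonad setting. One small point: for (2) the relevant instantiation of the pentagon is with the \emph{middle} argument set to $i$ (i.e.\ $g=i$ in the associativity axiom), not the outermost one, since it is $\mu_{f,i}$ that the given unit axiom controls and $\mu_{i,h}$ that you are trying to constrain; with that adjustment your sketch goes through by cancelling invertible $2$-cells exactly as you describe.
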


\begin{example}\label{psh ex}
The example of a relative pseudomonad which will be the focus of this paper is that of the presheaf construction.\[X \mapsto \Psh X := [X^{op},\Set]\]
Write $\Cat$ for the 2-category of small categories, functors and natural transformations, and write $\CAT$ for the 2-category of locally-small categories. Since the category of presheaves on a small category is in general only locally small, it is natural to ask whether $\Psh$ can be given the structure of a relative pseudomonad along the inclusion 2-functor $J : \Cat \to \CAT$.

This is shown in \cite{fghw} via the construction of a relative pseudoadjunction; the structure of a relative pseudomonad is given to $\Psh$ as follows:

\begin{itemize}
    \item for an object $X \in \Cat$ we have $\Psh X \in \CAT$ and unit map $y_X : X \to \Psh X$ given by the Yoneda embedding,
    \item for $X,Y \in \Cat$ and a functor $f : X \to \Psh Y$, the extension $f^* : \Psh X \to \Psh Y$ is given by the left Kan extension of $f$ along the Yoneda embedding
\[\begin{tikzcd}[ampersand replacement=\&]
	X \& {\Psh X} \\
	\& {\Psh Y}
	\arrow["y", from=1-1, to=1-2]
	\arrow[""{name=0, anchor=center, inner sep=0}, "{f^* := \Lan_y f}", from=1-2, to=2-2]
	\arrow[""{name=1, anchor=center, inner sep=0}, "f"', from=1-1, to=2-2]
	\arrow["{\eta_f}", shorten <=4pt, shorten >=4pt, Rightarrow, from=1, to=0]
\end{tikzcd}\] which also defines the 2-cells $\eta_f : f \to f^*y$ (note that since the Yoneda embedding is fully faithful the maps $\eta_f$ are invertible, as required),
    \item for $f : JX \to TY$ and $g : JW \to TX$, the 2-cell $\mu_{f,g} : (f^*g)^* \to f^*g^*$ is uniquely determined by the universal property of the left Kan extension:
\[\begin{tikzcd}[ampersand replacement=\&]
	W \& {\Psh W} \& W \&\& {\Psh W} \\
	\& {\Psh X} \&\&\&\& {\Psh X} \\
	\& {\Psh Y} \&\&\& {\Psh Y}
	\arrow["{f^*}", from=2-2, to=3-2]
	\arrow[""{name=0, anchor=center, inner sep=0}, "g"', from=1-1, to=2-2]
	\arrow["y", from=1-1, to=1-2]
	\arrow[""{name=1, anchor=center, inner sep=0}, "{g^*}", from=1-2, to=2-2]
	\arrow[""{name=2, anchor=center, inner sep=0}, "{f^*g}"', from=1-3, to=3-5]
	\arrow["y", from=1-3, to=1-5]
	\arrow[""{name=3, anchor=center, inner sep=0}, "{(f^*g)^*}"{description}, from=1-5, to=3-5]
	\arrow["{g^*}", from=1-5, to=2-6]
	\arrow["{f^*}", from=2-6, to=3-5]
	\arrow["{\eta_g}", shorten <=4pt, shorten >=4pt, Rightarrow, from=0, to=1]
	\arrow["{\eta_{f^*g}}", shorten <=7pt, shorten >=14pt, Rightarrow, from=2, to=3]
	\arrow["{\mu_{f,g}}", shorten <=11pt, Rightarrow, from=3, to=2-6]
\end{tikzcd}\]
    \item for $X \in \Cat$, the 2-cell $\theta_X : y_X^* \to 1$ is also uniquely determined by the universal property of the left Kan extension:
\[\begin{tikzcd}[ampersand replacement=\&]
	X \& {\Psh X} \& X \& {\Psh X} \\
	\& {\Psh X} \&\& {\Psh X}
	\arrow[""{name=0, anchor=center, inner sep=0}, "y"', from=1-1, to=2-2]
	\arrow["y", from=1-1, to=1-2]
	\arrow[""{name=1, anchor=center, inner sep=0}, "1", from=1-2, to=2-2]
	\arrow[""{name=2, anchor=center, inner sep=0}, "y"', from=1-3, to=2-4]
	\arrow["y", from=1-3, to=1-4]
	\arrow[""{name=3, anchor=center, inner sep=0}, "{i^*}"{description}, from=1-4, to=2-4]
	\arrow[""{name=4, anchor=center, inner sep=0}, "1", curve={height=-30pt}, from=1-4, to=2-4]
	\arrow["1", shorten <=4pt, shorten >=4pt, Rightarrow, from=0, to=1]
	\arrow["{\eta_y}", shorten <=4pt, shorten >=4pt, Rightarrow, from=2, to=3]
	\arrow["{\theta_X}", shorten <=6pt, shorten >=6pt, Rightarrow, from=3, to=4]
\end{tikzcd}\]
\end{itemize}

\end{example}

\section{Strong relative pseudomonads}

The 2-categories $\Cat$ and $\CAT$ possess more structure than simply being 2-categories; they are in particular cartesian monoidal 2-categories. Thus we will seek to develop the Kock's theory of monads on symmetric monoidal closed categories \cite{kock1970} for relative pseudomonads, defining notions of \emph{strong relative pseudomonads} and \emph{pseudocommutative relative pseudomonads}. These will specialise in the one-dimensional ordinary setting to Kock's strong monads and commutative monads, respectively. To avoid some of the coherence isomorphisms inherent to working with monoidal 2-categories, we will work in the related setting of 2-multicategories (see Definition~\ref{2mult}). 

We seek to consider the notion of a relative pseudomonad along $J : \D \to \C$ when $\C$ and $\D$ are 2-multicategories. We will define a `strong relative pseudomonad' from scratch to take this role, and note that a every strong relative pseudomonad induces a canonical relative pseudomonad structure. In order to do this, let us recall the definition of a 2-multicategory \cite{hermida2000} (taking $V = \Cat$ to specialise the $V$-enriched theory).

\begin{definition}\label{2mult}
(2-multicategory) A 2-multicategory $\C$ is a multicategory enriched in $\Cat$. Unwrapping this statement a little, a 2-multicategory $\C$ is given by
    \begin{enumerate}
        \item a collection of objects $X \in \ob\C$, together with
        \item a category of multimorphisms $\C(X_1,...,X_n;Y)$ for all $n \geq 0$ and objects $X_1,...,X_n,Y$ which we call a \emph{hom-category}; an object of the hom-category $\C(X_1,...,X_n;Y)$ is denoted by $f : X_1,...,X_n \to Y$,
        \item an identity multimorphism functor $\mathbf{1}_X : \mathbb{1} \to \C(X;X) : * \mapsto 1_X$ for all $X \in \ob\C$, and
        \item composition functors \[\C(X_1,...,X_n;Y) \times \C(W_{1,1},...,W_{1,m_1};X_1) \times ...\times \C(W_{n,1},...,W_{n,m_n};X_n)\]\begin{align*}
            &\to \C(W_{1,1},...,W_{n,m_n};Y)\\
            (f,g_1,...,g_n) &\mapsto f \circ (g_1,...,g_n)
        \end{align*} for all arities $n,m_1,...,m_n$ and objects $Y,X_1,...,X_n,W_{1,1},...,W_{n,m_n}$ in $\C$.
    \end{enumerate}
    where the identity and composition functors satisfy the usual associativity and identity axioms for an enrichment. 
\end{definition}

As a point of notation, given $f : X_1,...,X_n \to Y$ and $g : W_1,...,W_m \to X_j$ we will abbreviate composites of the form $f \circ (1,...,1,g,1,...,1)$ to $f \circ_j g$ .

\begin{remark} We can relate 2-multicategories to more familiar structures.
\begin{itemize}
    \item Every 2-multicategory $\C$ restricts to a 2-category by considering only the unary hom-categories $\C(X;Y)$.
    \item Monoidal 2-categories (defined in for example \cite{day1997}) have underlying 2-multicategories, where hom-categories $\C(X_1,...,X_n;Y)$ are given by $\C(X_1 \otimes ... \otimes X_n,Y)$ (choosing the leftmost bracketing of the tensor product); this is shown in \cite{hermida2000} Proposition 7.1 (2). For example, both $\Cat$ and $\CAT$ can be given 2-multicategorical structures.
\end{itemize}
\end{remark}

We seek to generalise Kock's notion of a strong monad \cite{kock1970} (and Uustalu's definition of a strong relative monad \cite{uustalu10}) on a monoidal category. A strong monad structure on a monoidal category is given by a map
\[t_{X,Y} : X \otimes TY \to T(X\otimes Y)\]
satisfying some axioms \cite{kock1970}. To define a suitable notion of strong relative pseudomonad in the 2-multicategorical setting, we extend a relative pseudomonad's unary functors $\C(JX,TY) \xrightarrow{(-)^*} \C(TX,TY)$ to general $n$-ary hom-categories
\[\C(B_1,...,JX,...,B_n;TY) \xrightarrow{(-)^{t_i}} \C(B_1,...,TX,...,B_n;TY),\]
which we call \emph{strengthenings}. To use this to construct the map $t$ in the one-dimensional monoidal $J=1$ case, we begin with the unit \[i : X\otimes Y \to T(X \otimes Y).\] Passing to the underlying multicategory, this corresponds to a map \[i : X,Y \to T(X \otimes Y).\] We can strengthen this map in the second argument to obtain
\[i^t : X,TY \to T(X \otimes Y).\] Now passing back to the original monoidal category we have found a strength map $X \otimes TY \to T(X \otimes Y)$, and one can check that this satisfies the strength axioms. This derivation justifies the use of the terminology `strength' to refer to the functors 
\[\C(B_1,...,JX,...,B_n;TY) \xrightarrow{(-)^{t_i}} \C(B_1,...,TX,...,B_n;TY).\]

\begin{definition}\label{paramrelpsmonad}
(Strong relative pseudomonad) Let $\C$ and $\D$ be 2-multicategories and let $J : \D \to \C$ be a (unary) 2-functor between them. A \emph{strong relative pseudomonad} $(T,i,{}^t;\tilde t, \hat t, \theta)$ along $J$ comprises:
\begin{itemize}
    \item for every object $X$ in $\D$ an object $TX$ in $\C$ and unit map $i_X : JX \to TX$,
    \item for every $n$, index $1\leq i \leq n$, objects $B_1,...,B_{i-1},B_{i+1},...,B_n$ in $\C$ and objects $X,Y$ in $\D$ a functor \[\C(B_1,...,B_{i-1},JX,B_{i+1},...,B_n;TY) \xrightarrow{(-)^{t_i}} \C(B_1,...,B_{i-1},TX,B_{i+1},...,B_n;TY)\] called the \emph{strength} (in the $i$th argument) and which is pseudonatural in all arguments, along with three natural families of invertible 2-cells:
    \item $\tilde t_f : f \to f^{t_j}\circ_j i$,
    \item $\hat t_{f,g} : (f^{t_j} \circ_j g)^{t_{j+k-1}} \to f^{t_j} \circ_j g^{t_k}$, and
    \item $\theta_X : (i_X)^{t_1} \to 1_{TX}$
    
    for $f : B_1,...,JX,...,B_n \to TY$ and $g : C_1,...,JW,...,C_m \to TX$, satisfying the coherence conditions (1) and (2) shown below.
\end{itemize}
As a notational shorthand, when a map $f : B_1,...,JX,...,B_n \to TY$ has only one argument in the domain of the form $JX$ for some $X \in \ob\D$, we will denote its strengthening simply as $f^t$, rather than $f^{t_i}$. We will furthermore write $f^t \circ_t g$ to denote the composition of $f^t$ with $g$ in this strengthened argument. In this notation the families of invertible 2-cells above are:
\begin{align*}
    \tilde t_f &: f \to f^t \circ_t i\\
    \hat t_{f,g} &: (f^t \circ_t g)^t \to f^t \circ_t g^t\\
    \theta &: i^t \to 1
\end{align*}
(We also omit subscripts from unit maps and from $\theta$ when unambiguous.) With this notation in hand, the two coherence conditions for these 2-cells are:

\begin{enumerate}
    \item[(1)] for every $f : B_1,...JX...B_n \to TY$, $g : C_1,...,JW,...C_m \to TX$ and\\ $h : D_1,...,JV,...,D_l \to TW$ the diagram
\[\begin{tikzcd}[ampersand replacement=\&]
	{((f^t \circ_t g)^t \circ_t h)^t} \&\& {(f^t \circ_t g)^t \circ_t h^t} \\
	{(f^t \circ_t g^t \circ_t h)^t} \& {f^t \circ_t (g^t \circ_t h)^t} \& {(f^t \circ_t g^t) \circ_t h^t}
	\arrow["{(\hat t_{f,g} \circ_t h)^t}"', from=1-1, to=2-1]
	\arrow["{\hat t_{f^t \circ_t g,h}}", from=1-1, to=1-3]
	\arrow["{\hat t_{f,g} \circ_t h^t}", from=1-3, to=2-3]
	\arrow["{\hat t_{f,g^t \circ_t h}}"', from=2-1, to=2-2]
	\arrow["{f^t \circ_t \hat t_{g,h}}"', from=2-2, to=2-3]
\end{tikzcd}\] commutes, and
    \item[(2)] for every $f : B_1,...,JX,...,B_n \to TY$ the diagram
\[\begin{tikzcd}[ampersand replacement=\&]
	{f^t} \& {(f^t \circ_t i)^t} \& {f^t \circ_t i^t} \\
	\&\& {f^t}
	\arrow["{(\tilde t_f)^t}", from=1-1, to=1-2]
	\arrow["{\hat t_{f,i}}", from=1-2, to=1-3]
	\arrow["{f^t \circ_t \theta}", from=1-3, to=2-3]
	\arrow[Rightarrow, no head, from=1-1, to=2-3]
\end{tikzcd}\] commutes.
\end{enumerate}
\end{definition}

\begin{remark}
    The stipulation that the maps \[\C(B_1,...,JX,...,B_n;TY) \xrightarrow{(-)^{t_j}} \C(B_1,...,TX,...,B_n;TY)\] be pseudonatural in all arguments asks in particular for invertible 2-cells of the form
\begin{itemize}
    \item $(f \circ_k g)^t \cong f^t \circ_k g$ for $g : C_1,...,C_m \to B_k$ (where $k \neq j$).
\end{itemize} Wherever such pseudonaturality isomorphisms arise in diagrams we will leave them anonymous, as they can be inferred from the source and target.
\end{remark}

\begin{remark}
    The data for a strong relative pseudomonad resembles that for a (unary) relative pseudomonad very closely. Indeed, restricting $\C$ and $\D$ to their 2-categories of unary maps, $(T,i,{}^t)$ is exactly a (unary) relative pseudomonad, with
\begin{align*}
    (-)^* &:= (-)^t,\\
    \eta &:= \tilde t,\\
    \mu &:= \hat t,\\
    \theta &:= \theta.
\end{align*}
\end{remark}

As with relative pseudomonads, we can derive more equalities of 2-cells for a strong relative pseudomonads. The proof of the following Lemma~\ref{param3moreeqs} is formally identical to the proof of Lemma~\ref{3moreeqs}.

\begin{lemma}\label{param3moreeqs}
    Let $T$ be a strong relative pseudomonad along $J : \D \to \C$. Then the following three diagrams commute:
    \begin{enumerate}
        \item[(1)] for every $f : B_1,...,JX,...,B_n \to TY$ and $g : C_1,...,JW,...,C_m \to TX$, the diagram
\[\begin{tikzcd}[ampersand replacement=\&]
	{f^t \circ_t g} \& {(f^t \circ_t g)^t \circ_t i} \\
	\& {f^t \circ_t g^t \circ_t i}
	\arrow["{\tilde t_{f^t \circ_t g}}", from=1-1, to=1-2]
	\arrow["{\hat t_{f,g} \circ_t i}", from=1-2, to=2-2]
	\arrow["{f^t \circ_t \tilde t_g}"', from=1-1, to=2-2]
\end{tikzcd}\] commutes.
        \item[(2)] for every $f : B_1,...,JX,...,B_n \to TY$, the diagram
\[\begin{tikzcd}[ampersand replacement=\&]
	{(i^t \circ f)^t} \& {i^t \circ f^t} \\
	\& {f^t}
	\arrow["{\theta \circ f^t}", from=1-2, to=2-2]
	\arrow["{(\theta \circ f)^t}"', from=1-1, to=2-2]
	\arrow["{\hat t_{i,f}}", from=1-1, to=1-2]
\end{tikzcd}\] commutes, and
        \item[(3)] for every object $X \in \ob\D$, the diagram
\[\begin{tikzcd}[ampersand replacement=\&]
	i \& {i^t \circ i} \\
	\& i
	\arrow["{\tilde t_i}", from=1-1, to=1-2]
	\arrow["{\theta \circ i}", from=1-2, to=2-2]
	\arrow[Rightarrow, no head, from=1-1, to=2-2]
\end{tikzcd}\] commutes.
    \end{enumerate}
\end{lemma}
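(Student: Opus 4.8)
The plan is to transport the proof of Lemma~\ref{3moreeqs} essentially verbatim, as the statement anticipates. The point is that although the strengths $(-)^{t}$ act on $n$-ary morphisms, every step in the proof of Lemma~\ref{3moreeqs} is purely formal: it invokes only the two defining axioms together with the naturality and invertibility of $\tilde t$, $\hat t$ and $\theta$. The operation $(-)^{t}$ with the strengthened composition $\circ_t$ satisfies coherence conditions of exactly the same syntactic shape as the extension $(-)^{*}$ with its composition in a relative pseudomonad; compare Definition~\ref{paramrelpsmonad}(1),(2) with the relative pseudomonad axioms, and recall the Remark identifying the unary restriction of $(T,i,{}^t)$ as a relative pseudomonad. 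The spectator arguments $B_1,\dots,B_{i-1},B_{i+1},\dots,B_n$ are simply carried along, the requisite pseudonaturality isomorphisms being inserted wherever a strength must pass a non-active argument; since these are left anonymous and always occur together with their inverses, they do not affect the reasoning. It therefore suffices to replay the three derivations of Lemma~\ref{3moreeqs} under the dictionary $(-)^{*}\leftrightarrow(-)^{t}$, $\eta\leftrightarrow\tilde t$, $\mu\leftrightarrow\hat t$, $\theta\leftrightarrow\theta$.

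Concretely, I would prove the diagrams in the order (1), (2), (3), following Kelly's redundancy argument for monoidal categories \cite{kelly1964} in the pseudomonad form of Marmolejo \cite{marm1997}. Diagram (1) is the analogue of the right-unit triangle: I would paste an instance of the associativity axiom in which the unit $i$ is inserted into the inner (rightmost) slot against the unit axiom applied to $f^t \circ_t g$, then cancel the invertible $\hat t$ and $\tilde t$ using their naturality squares. Diagram (2) is dual, the analogue of the left-unit triangle: here I would use the associativity axiom with $i^t$ in the outer slot together with the unit axiom, reducing to the compatibility of $\theta$ with $\hat t$. Finally, (3) is the analogue of $l_I = r_I$; I would obtain it by specialising the earlier parts to units $f=g=i$ and combining with the unit axiom, using invertibility of $\tilde t_i$ and $\theta$ to collapse the composite to the identity.

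Since the genuine mathematical content is already isolated in Lemma~\ref{3moreeqs}, the main obstacle here is bookkeeping rather than insight: one must verify that the formal derivation really survives the passage to $n$-ary morphisms, that is, that each pentagon and triangle instance used in the unary proof admits a well-typed $n$-ary counterpart, and that every pseudonaturality isomorphism introduced to move a spectator argument across a strength is subsequently cancelled by its inverse. Provided this typing discipline is respected, no new coherence is needed and the proof closes exactly as for Lemma~\ref{3moreeqs}.
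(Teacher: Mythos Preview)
Your proposal is correct and matches the paper's approach exactly: the paper states that the proof is formally identical to that of Lemma~\ref{3moreeqs}, and you have spelled out precisely this transport along the dictionary $(-)^{*}\leftrightarrow(-)^{t}$, $\eta\leftrightarrow\tilde t$, $\mu\leftrightarrow\hat t$, $\theta\leftrightarrow\theta$, with the spectator arguments carried along via pseudonaturality.
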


\begin{example}
    The presheaf relative pseudomonad from Example~\ref{psh ex} can be given the structure of a strong relative pseudomonad. Given a multimorphism \[f : B_1,...,X,...,B_n \to \Psh Y\] with $X,Y \in \Cat$ and $B_k \in \CAT$, its strengthening $f^t$ is defined to be the left Kan extension
\[\begin{tikzcd}[ampersand replacement=\&]
	{B_1,...,X,...,B_n} \&\& {B_1,...,\Psh X,...,B_n} \\
	\\
	\&\& {\Psh Y}
	\arrow[""{name=0, anchor=center, inner sep=0}, "f"', from=1-1, to=3-3]
	\arrow["{1,...,y_X,...,1}", from=1-1, to=1-3]
	\arrow[""{name=1, anchor=center, inner sep=0}, "{f^t := \Lan_{1,...,y,...,1} f}", from=1-3, to=3-3]
	\arrow["{\tilde t_f}", shorten <=11pt, shorten >=11pt, Rightarrow, from=0, to=1]
\end{tikzcd}\] which also defines the 2-cells $\tilde t_f : f \to f^t \circ_t y$. As when giving $\Psh$ a relative pseudomonad structure, the 2-cells $\hat t_{f,g}$, $\theta$ are defined via the universal property of the left Kan extension. For details and a proof that this indeed endows $\Psh$ with a strong relative pseudomonad structure, see Proposition~\ref{pshisparam} in the final section.
\end{example}

Having generalised Kock's notion of a strong monad, we seek to prove a generalisation of his result that every strong monad is a lax monoidal functor. For this we define a notion of a pseudo-multifunctor on a 2-multicategory.

\begin{definition}
    (Pseudo-multifunctor) Given multi-2-categories $\C,\D$, a \emph{pseudo-multifunctor} $F : \D \to \C$ consists of:
    \begin{itemize}
        \item a function $\ob\D \xrightarrow{F} \ob\C : X \mapsto FX$,
        \item for each hom-category $\D(X_1,...,X_n;Y)$ in $\D$ a functor \[\D(X_1,...,X_n;Y) \to \C(FX_1,...,FX_n;FY) : f \mapsto Ff,\] along with
        \item for each $X \in \ob\D$ an invertible 2-cell \[\tilde F_X : F1_X \implies 1_{FX},\]
        \item for each $f : X_1,...,X_n \to Y$, $1 \leq i \leq n$ and $g : W_1,...,W_m \to X_i$ an invertible 2-cell \[\hat F_{f,g} : F(f \circ_i g) \implies Ff \circ_i Fg\]
    \end{itemize} satisfying the following three coherence conditions which parallel the unit and associativity diagrams for a lax monoidal functor:
    \begin{enumerate}
        \item [(1),(2)] two unit axioms: for each $f : X_1,...,X_n \to Y$ and $1 \leq i \leq n$ the diagrams
\[\begin{tikzcd}[ampersand replacement=\&]
	{F(1_Y\circ f)} \& {F1_Y\circ Ff} \& {F(f \circ_i 1_{X_i})} \& {Ff \circ_i F1_{X_i}} \\
	Ff \& {1_{FY} \circ Ff} \& Ff \& {Ff \circ_i 1_{FX_i}}
	\arrow[Rightarrow, no head, from=2-1, to=1-1]
	\arrow[Rightarrow, no head, from=2-1, to=2-2]
	\arrow["{\hat F_{1,f}}", from=1-1, to=1-2]
	\arrow["{\tilde F_Y \circ Ff}", from=1-2, to=2-2]
	\arrow["{\hat F_{f,1}}", from=1-3, to=1-4]
	\arrow["{Ff \circ_i \tilde F_{X_i}}", from=1-4, to=2-4]
	\arrow[Rightarrow, no head, from=1-3, to=2-3]
	\arrow[Rightarrow, no head, from=2-3, to=2-4]
\end{tikzcd}\] commute, and
    \item [(3)] one associativity axiom: for each  $f : X_1,...,X_n \to Y$, $1 \leq i \leq n$, $g : W_1,...,W_m \to X_i$, $1 \leq j \leq m$ and $h : V_1,...,V_l \to W_j$ the diagram
    \begin{small}
\[\begin{tikzcd}[ampersand replacement=\&]
	{F(f \circ_i (g \circ_j h))} \&\& {Ff \circ_i F(g \circ_j h)} \&\& {Ff \circ_i (Fg \circ_j Fh)} \\
	{F((f \circ_i g) \circ_{i+j-1} h)} \&\& {F(f \circ_i g) \circ_{i+j-1} Fh} \&\& {(Ff \circ_i Fg) \circ_{i+j-1} Fh}
	\arrow[Rightarrow, no head, from=1-1, to=2-1]
	\arrow["{\hat F_{f,g \circ_j h}}", from=1-1, to=1-3]
	\arrow["{\hat F_{f \circ_i g,h}}"', from=2-1, to=2-3]
	\arrow["{Ff \circ _i \hat F_{g,h}}", from=1-3, to=1-5]
	\arrow["{\hat F_{f,g} \circ_{i+j-1} Fh}"', from=2-3, to=2-5]
	\arrow[Rightarrow, no head, from=1-5, to=2-5]
\end{tikzcd}\]
    \end{small}
     commutes.
    \end{enumerate}
If the 2-cells $\tilde F$, $\hat F$ are all identities we call $F$ a \emph{(strict) multicategorical 2-functor}.
\end{definition}

Just as the underlying functor of every strong monad is a lax monoidal functor, the underlying pseudofunctor of every strong relative pseudomonad is a pseudo-multifunctor.

\begin{proposition}\label{param=>psfunctor}
    Let $T$ be a strong relative pseudomonad along multicategorical 2-functor $J : \D \to \C$. Then $T$ is a pseudo-multifunctor $T : \D \to \C$.
\end{proposition}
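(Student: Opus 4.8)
The plan is to promote the object assignment $X\mapsto TX$ to a pseudo-multifunctor by supplying the action on hom-categories, the unit comparison $\tilde T$, and the composition comparison $\hat T$, and then to verify the three coherence axioms by reducing them to the axioms of Definition~\ref{paramrelpsmonad} and to Lemma~\ref{param3moreeqs}. This runs in exact parallel with the passage from a unary relative pseudomonad to a pseudofunctor, where the functoriality constraint $T(f\circ g)\to Tf\circ Tg$ is built from $\tilde t$ (i.e.\ $\eta$) followed by $\hat t$ (i.e.\ $\mu$); here the same cells are used, but ``spread across the several arguments''.

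On objects $T$ acts as given. For a multimorphism $f:X_1,\dots,X_n\to Y$ I would set $Tf$ to be the total strengthening of $i_Y\circ Jf$: writing $Sh$ for the result of strengthening a map $h:JX_1,\dots,JX_n\to TY$ once in each of its arguments (say from left to right), put $Tf:=S(i_Y\circ Jf):TX_1,\dots,TX_n\to TY$. Because $J$ acts functorially on $2$-cells, whiskering by $i_Y$ is functorial, and each strength $(-)^{t_k}$ is a functor, the assignment $f\mapsto Tf$ is a functor on each hom-category. Since $J$ preserves identities one has $T1_X=(i_X\circ 1_{JX})^{t}=i^t$, so I take $\tilde T_X:=\theta_X:i^t\to 1_{TX}$.

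The heart of the argument is the comparison $\hat T_{f,g}:T(f\circ_i g)\to Tf\circ_i Tg$, for $g:W_1,\dots,W_m\to X_i$, which I would assemble as a composite of invertible structural $2$-cells. Let $P$ be the strengthening of $i_Y\circ Jf$ in every argument except the $i$th, so that $Tf=P^{t_i}$ and $Tf\circ_i Tg=P^{t_i}\circ_i S(i_{X_i}\circ Jg)$. Applying $\hat t$ once for each of the $m$ arguments of $g$ exhibits $Tf\circ_i Tg$ as isomorphic to the strengthening in the $W$-arguments of $P^{t_i}\circ_i(i_{X_i}\circ Jg)$. The cell $\tilde t_P$ identifies $P^{t_i}\circ_i i_{X_i}$ with $P$, and the pseudonaturality isomorphisms of the strengths in the $f$-arguments identify the strengthening of $P\circ_i Jg$ with $S(i_Y\circ Jf\circ_i Jg)$; using strict multifunctoriality of $J$, $Jf\circ_i Jg=J(f\circ_i g)$, so this last object is exactly $T(f\circ_i g)$. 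Composing these isomorphisms in the appropriate direction defines $\hat T_{f,g}$, and its naturality in $f$ and $g$ follows from the naturality of the families $\tilde t$, $\hat t$ together with the pseudonaturality of the strengths.

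It then remains to verify the two unit axioms and the associativity axiom of a pseudo-multifunctor. The unit axioms reduce to the unit axiom (2) of Definition~\ref{paramrelpsmonad} and to parts (2) and (3) of Lemma~\ref{param3moreeqs}, which are exactly the identities governing the interaction of $\theta=\tilde T$ with $\tilde t$ and $\hat t$; these cancel the $\theta$-factors in $\hat T_{f,1}$ and $\hat T_{1,f}$ against the remaining structural cells. The associativity axiom, for the equality $f\circ_i(g\circ_j h)=(f\circ_i g)\circ_{i+j-1}h$ in $\D$, is where I expect the main obstacle: one must expand both $\hat T$-composites into their constituent $\tilde t$, $\hat t$ and pseudonaturality cells and match them term by term against the associativity axiom (1) of Definition~\ref{paramrelpsmonad}, part (1) of Lemma~\ref{param3moreeqs}, and the coherence of the pseudonaturality isomorphisms. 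The real difficulty here is bookkeeping rather than conceptual content: one must fix and track a convention for the order in which the various arguments are strengthened, and check that strengthenings in disjoint arguments commute up to the canonical isomorphisms, so that the definition of $\hat T$ is independent of these choices. Once this is set up, each coherence square is the ``multi-argument'' image of the corresponding square already proved for the underlying unary relative pseudomonad.
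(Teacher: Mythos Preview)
Your overall strategy---define $Tf$ by iterated strengthening of $i_Y\circ Jf$, take $\tilde T:=\theta$, and build $\hat T$ from pseudonaturality isomorphisms, a $\tilde t$, and iterated $\hat t$'s---is exactly the paper's approach, and your identification of which axioms feed into which coherence conditions is essentially correct.

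There is, however, a genuine gap in your construction of $\hat T_{f,g}$. You write ``let $P$ be the strengthening of $i_Y\circ Jf$ in every argument except the $i$th, so that $Tf=P^{t_i}$''. But you defined $Tf$ as the left-to-right strengthening $\bar f^{t_1\cdots t_n}$, whereas $P^{t_i}=\bar f^{t_1\cdots t_{i-1}t_{i+1}\cdots t_n t_i}$ applies $t_i$ \emph{last}. These are not equal, and the isomorphism between them is precisely a pseudocommutativity cell $\gamma$, which is \emph{not} part of the strong relative pseudomonad data and is not assumed in this proposition. Your later remark that one must ``check that strengthenings in disjoint arguments commute up to the canonical isomorphisms'' is thus not a bookkeeping chore but an unprovable claim at this level of generality: the pseudonaturality isomorphisms you have available are of the form $(h\circ_k g)^{t_j}\cong h^{t_j}\circ_k g$ for $k\neq j$, and these say nothing about interchanging $(-)^{t_j}$ with $(-)^{t_k}$.

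The fix, which the paper carries out, is to never reorder strengthenings. One keeps the left-to-right order throughout and builds $\hat T_{f,g}$ as
\[
(\bar f\circ_i Jg)^{t_1\cdots t_{n+m-1}}\xrightarrow{\sim}(\bar f^{t_1\cdots t_{i-1}}\circ_i Jg)^{t_i\cdots t_{n+m-1}}\xrightarrow{\tilde t}(\bar f^{t_1\cdots t_i}\circ_i \bar g)^{t_i\cdots t_{n+m-1}}\xrightarrow{\hat t\cdots\hat t}\cdots\xrightarrow{\sim}\bar f^{t_1\cdots t_n}\circ_i\bar g^{t_1\cdots t_m},
\]
where the outer pseudonaturality isomorphisms only move strengthenings past compositions in \emph{other} slots, never past each other. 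With this construction the coherence verifications go through using exactly the axioms you named (in fact the associativity axiom needs only Lemma~\ref{param3moreeqs}(1), not Definition~\ref{paramrelpsmonad}(1)).
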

\begin{proof}
Suppose $T$ is a strong relative pseudomonad. As a point of notation, given a map $f : X_1,...,X_n \to Y$ let us define
\[\bar f := i_Y \circ JF : JX_1,...,JX_n \to TY.\] Now to show the $T$ is a pseudo-multifunctor, we begin by defining the action of $T$ on 1-cells by the functors
\[\D(X_1,...,X_n;Y) \xrightarrow{(i_Y\circ J-)^{t_1t_2...t_n}} \C(TX_1,...,TX_n;TY),\] so that for $f : X_1,...,X_n \to Y$ we have \[Tf := (i_Y\circ Jf)^{t_1t_2...t_n} = \bar f^{t_1,...,t_n} : TX_1,...,TX_n \to TY.\]
 
We need to construct 2-cells $\tilde T_X : T1_X \implies 1_{TX}$ and $\hat T_{f,g} : T(f \circ_i g) \implies Tf \circ_i Tg$. For the former, we can use the map
\[\begin{tikzcd}[ampersand replacement=\&]
	{T1_X = (i_X \circ J1_X)^t = (i_X)^t} \& {1_{TX}}
	\arrow["{\theta_X}", from=1-1, to=1-2]
\end{tikzcd}\] and for the latter, we employ the composite
\begin{align*}
    T(f \circ_i g) &= (i \circ (Jf \circ_i Jg))^{t_1...t_{n+m-1}} = (\bar f \circ_i Jg)^{t_1...t_{n+m-1}}\\
    &\xrightarrow{\sim} (\bar f^{t_1...t_{i-1}} \circ_i Jg)^{t_i...t_{n+m-1}}\\
    &\xrightarrow{\tilde t} (\bar f^{t_1...t_i} \circ_i \bar g)^{t_i...t_{n+m-1}}\\
    &\xrightarrow{\hat t...\hat t} (\bar f^{t_1...t_i} \circ_i \bar g^{t_1...t_m})^{t_{i+m}...t_{n+m-1}}\\
    &\xrightarrow{\sim} \bar f^{t_1...t_n} \circ_i \bar g^{t_1...t_m} = Tf \circ_i Tg.\\
\end{align*}
It remains to show that the three coherence conditions hold. For the first
\[\begin{tikzcd}[ampersand replacement=\&]
	{T(1_Y\circ f)} \& {T1_Y\circ Tf} \\
	Tf \& {1_{TY} \circ Tf}
	\arrow[Rightarrow, no head, from=2-1, to=1-1]
	\arrow[Rightarrow, no head, from=2-1, to=2-2]
	\arrow["{\hat T_{1,f}}", from=1-1, to=1-2]
	\arrow["{\tilde T_Y \circ Tf}", from=1-2, to=2-2]
\end{tikzcd}\] we rewrite everything in terms of parameterisation and obtain the diagram
\[\begin{tikzcd}[ampersand replacement=\&]
	{(\bar 1 \circ Jf)^{t_1t_2...t_n}} \& {(\bar 1^t \circ \bar f)^{t_1t_2...t_n}} \& {\bar 1^t \circ \bar f^{t_1...t_n}} \\
	\&\& {i^t \circ \bar f^{t_1...t_n}} \\
	{(i \circ Jf)^{t_1t_2...t_n}} \&\& {\bar f^{t_1t_2...t_n}}
	\arrow[Rightarrow, no head, from=3-1, to=1-1]
	\arrow[Rightarrow, no head, from=3-1, to=3-3]
	\arrow[Rightarrow, no head, from=1-3, to=2-3]
	\arrow["\theta", from=2-3, to=3-3]
	\arrow["{\tilde t}", from=1-1, to=1-2]
	\arrow["{\hat t...\hat t}", from=1-2, to=1-3]
\end{tikzcd}\] To show that this commutes, we fill it in
\[\begin{tikzcd}[ampersand replacement=\&]
	{(\bar 1 \circ Jf)^{t_1t_2...t_n}} \& {(\bar 1^t \circ \bar f)^{t_1t_2...t_n}} \& {\bar 1^t \circ \bar f^{t_1...t_n}} \\
	{(i \circ Jf)^{t_1t_2...t_n}} \& {(i^t \circ \bar f)^{t_1t_2...t_n}} \& {i^t \circ \bar f^{t_1...t_n}} \\
	\& {\bar f^{t_1t_2...t_n}}
	\arrow[Rightarrow, no head, from=2-1, to=1-1]
	\arrow[Rightarrow, no head, from=2-1, to=3-2]
	\arrow[Rightarrow, no head, from=1-3, to=2-3]
	\arrow["\theta", from=2-3, to=3-2]
	\arrow["{\tilde t}", from=1-1, to=1-2]
	\arrow["{\hat t...\hat t}", from=1-2, to=1-3]
	\arrow[Rightarrow, no head, from=1-2, to=2-2]
	\arrow["{\tilde t}", from=2-1, to=2-2]
	\arrow["{\hat t...\hat t}", from=2-2, to=2-3]
	\arrow["\theta", from=2-2, to=3-2]
\end{tikzcd}\] with two naturality squares and equalities of 2-cells (3) and (2) from Lemma~\ref{param3moreeqs}.

For the second
\[\begin{tikzcd}[ampersand replacement=\&]
	{T(f \circ_i 1_{X_i})} \& {Tf \circ_i T1_{X_i}} \\
	Tf \& {Tf \circ_i 1_{TX_i}}
	\arrow["{\hat T_{f,1}}", from=1-1, to=1-2]
	\arrow["{Tf \circ_i \tilde T_{X_i}}", from=1-2, to=2-2]
	\arrow[Rightarrow, no head, from=1-1, to=2-1]
	\arrow[Rightarrow, no head, from=2-1, to=2-2]
\end{tikzcd}\] we rewrite everything in terms of parameterisation and obtain the diagram
\[\begin{tikzcd}[ampersand replacement=\&]
	{(\bar f \circ_i J1)^{t_1...t_n}} \& {(\bar f^{t_1...t_{i-1}} \circ_i J1)^{t_i...t_n}} \& {(\bar f^{t_1...t_i} \circ_i \bar 1)^{t_i...t_n}} \\
	\&\& {(\bar f^{t_1...t_i} \circ_i \bar 1^t)^{t_{i+1}...t_n}} \\
	\&\& {\bar f^{t_1...t_n} \circ_i \bar 1^t} \\
	\&\& {\bar f^{t_1...t_n} \circ_i i^t} \\
	{\bar f^{t_1...t_n}} \&\& {\bar f^{t_1...t_n} \circ_i 1}
	\arrow[Rightarrow, no head, from=1-1, to=5-1]
	\arrow[Rightarrow, no head, from=5-1, to=5-3]
	\arrow[Rightarrow, no head, from=3-3, to=4-3]
	\arrow["\theta", from=4-3, to=5-3]
	\arrow["\sim", from=1-1, to=1-2]
	\arrow["{\tilde t}", from=1-2, to=1-3]
	\arrow["{\hat t}", from=1-3, to=2-3]
	\arrow["\sim", from=2-3, to=3-3]
\end{tikzcd}\] To show that this commutes, we fill it in
\[\begin{tikzcd}[ampersand replacement=\&]
	\& {(\bar f \circ_i J1)^{t_1...t_n}} \\
	\& {\bar f^{t_1...t_n}} \& {(\bar f^{t_1...t_{i-1}} \circ_i J1)^{t_i...t_n}} \\
	\& {(\bar f^{t_1...t_i} \circ_i i)^{t_i...t_n}} \& {(\bar f^{t_1...t_i} \circ_i \bar 1)^{t_i...t_n}} \\
	{(\bar f^{t_1...t_i} \circ_i 1)^{t_{i+1}...t_n}} \& {(\bar f^{t_1...t_i} \circ_i i^t)^{t_{i+1}...t_n}} \& {(\bar f^{t_1...t_i} \circ_i \bar 1^t)^{t_{i+1}...t_n}} \\
	{\bar f^{t_1...t_n} \circ_i 1} \& {\bar f^{t_1...t_n} \circ_i i^t} \& {\bar f^{t_1...t_n} \circ_i \bar 1^t}
	\arrow[Rightarrow, no head, from=5-3, to=5-2]
	\arrow["\theta", from=5-2, to=5-1]
	\arrow["\sim", from=1-2, to=2-3]
	\arrow["{\tilde t}", from=2-3, to=3-3]
	\arrow["{\hat t}", from=3-3, to=4-3]
	\arrow["\sim", from=4-3, to=5-3]
	\arrow[Rightarrow, no head, from=4-3, to=4-2]
	\arrow["\sim", from=4-2, to=5-2]
	\arrow["\theta", from=4-2, to=4-1]
	\arrow["\sim", from=4-1, to=5-1]
	\arrow[Rightarrow, no head, from=3-3, to=3-2]
	\arrow["{\hat t}", from=3-2, to=4-2]
	\arrow[Rightarrow, no head, from=2-3, to=2-2]
	\arrow[Rightarrow, no head, from=1-2, to=2-2]
	\arrow["{\tilde t}", from=2-2, to=3-2]
	\arrow[Rightarrow, no head, from=2-2, to=4-1]
\end{tikzcd}\] with naturality squares and the equality of 2-cells (2) from Definition~\ref{paramrelpsmonad}.

Finally, for the third diagram
\begin{small}
\[\begin{tikzcd}[ampersand replacement=\&]
	{T(f \circ_i (g \circ_j h))} \&\& {Tf \circ_i T(g \circ_j h)} \&\& {Tf \circ_i (Tg \circ_j Th)} \\
	{T((f \circ_i g) \circ_{i+j-1} h)} \&\& {T(f \circ_i g) \circ_{i+j-1} Th} \&\& {(Tf \circ_i Tg) \circ_{i+j-1} Th}
	\arrow[Rightarrow, no head, from=1-1, to=2-1]
	\arrow["{\hat T_{f,g \circ_j h}}", from=1-1, to=1-3]
	\arrow["{\hat T_{f \circ_i g,h}}"', from=2-1, to=2-3]
	\arrow["{Tf \circ _i \hat T_{g,h}}", from=1-3, to=1-5]
	\arrow["{\hat T_{f,g} \circ_{i+j-1} Th}"', from=2-3, to=2-5]
	\arrow[Rightarrow, no head, from=1-5, to=2-5]
\end{tikzcd}\]
\end{small}
 in the interest of space we shall merely note that verification involves, aside from naturality squares, only the equality of 2-cells (1) from Lemma~\ref{param3moreeqs}. Thus, with these three coherence conditions, every strong relative pseudomonad is indeed a pseudo-multifunctor.
\end{proof}

\begin{example}\label{psh is multi psfun}
    Proposition~\ref{param=>psfunctor} will imply that the presheaf relative pseudomonad is a pseudo-multifunctor. Using the coend formula for the left Kan extension, we find that for example, given a functor $F : A \times B \times C \to D$ in $\Cat$, the multicategorical action of $\Psh$ on $F$ has the form
    \begin{align*}
        \Psh F : \Psh A \times \Psh B \times \Psh C &\to \Psh D\\
        (p,q,r) &\mapsto \int^{c}\int^{b}\int^{a} p(a)\times q(b) \times r(c) \times y_{F(a,b,c)}.
    \end{align*}
\end{example}

\section{Pseudocommutativity}

In the classical situation described in \cite{kock1970}, a strong monad with left-strength $s$ and right-strength $t$ can be given the structure of lax monoidal functor in two ways:
\begin{align*}
    TX \otimes TY &\xrightarrow{t} T(TX \otimes Y) \xrightarrow{Ts} TT(X \otimes Y) \xrightarrow{\mu} T(X \otimes Y)\\
    TX \otimes TY &\xrightarrow{s} T(X \otimes TY) \xrightarrow{Tt} TT(X \otimes Y) \xrightarrow{\mu} T(X \otimes Y)\\
\end{align*} It is then natural to ask about those strong monads for which these two composites are equal, which Kock called \emph{commutative monads}. Hyland and Power \cite{hp2002} extend this notion to the 2-categorical setting, defining \emph{pseudocommutativity} by asking only for an invertible 2-cell between the two composites. 

Analogously, there is some freedom in the pseudo-multifunctorial structure we place on a given strong relative pseudomonad $T$; we defined the action of $T$ on morphisms by
\[Tf := \bar f^{t_1...t_n},\] but we could equally well have chosen
\[Tf := \bar f^{t_n...t_1}\] with the strengthenings applied in the reverse order. We define pseudocommutativity in our more general setting to imply that the two choices of definition of $Tf$ are coherently isomorphic.

\begin{definition}\label{pscom}
    (Pseudocommutative monad) Let $T$ be a strong relative pseudomonad. We say that $T$ is \emph{pseudocommutative} if for every pair of indices $1\leq j<k\leq n$ and map \[f : B_1,...,B_{j-1},JX,B_{j+1}...,B_{k-1},JY,B_{k+1},...,B_n \to TZ\] we have an invertible 2-cell \[\gamma_f : f^{t_kt_j} \to f^{t_jt_k} : B_1,...,TX,...,TY,...,B_n \to TZ\] which is pseudonatural in all arguments and which satisfies five coherence conditions (two for $\tilde t$, two for $\hat t$, and a braiding condition).
    
    We will extend our notation in the following way. When a map \[f : B_1,...,JX,...,JY,...,B_n \to TZ\] has two explicitly possible strengthenings, let strengthening in the leftmost of these two arguments be denoted by $f^s$ with 2-cells $\tilde s : f \to f^s \circ_s i$ and $\hat s : (f^s \circ_s g)^s \to f^s \circ g^t$, and let strengthening in the rightmost of these two arguments be denoted by $f^t$ with 2-cells $\tilde t$, $\hat t$. When $f$ has three explicitly possible strengthenings we furthermore use $f^u$, etc. The coherence conditions $\gamma$ must satisfy are as follows:
    \begin{enumerate}
        \item [(1),\ (2)] Precomposing $\gamma_f$ in the $j$th or $k$th argument with a unit map $i$: the diagrams
\[\begin{tikzcd}[ampersand replacement=\&]
	{f^t} \& {f^{ts} \circ_s i} \& {f^s} \& {(f^t\circ_t i)^s} \\
	\& {f^{st} \circ_s i} \&\& {f^{ts}\circ_t i} \\
	\& {(f^s \circ_s i)^t} \&\& {f^{st}\circ_t i}
	\arrow["{(\tilde t_f)^s}", from=1-3, to=1-4]
	\arrow["\sim", from=1-4, to=2-4]
	\arrow["{\gamma_f\circ_t i}", from=2-4, to=3-4]
	\arrow["{\tilde t_{f^s}}"', from=1-3, to=3-4]
	\arrow["{(\tilde s_f)^t}"', from=1-1, to=3-2]
	\arrow["{\tilde s_{f^t}}", from=1-1, to=1-2]
	\arrow["\sim", from=2-2, to=3-2]
	\arrow["{\gamma_f \circ_s i}", from=1-2, to=2-2]
\end{tikzcd}\] commute for $f : B_1,...,JX,...,JY,...,B_n \to TZ$.
    \item [(3),\ (4)] Precomposing $\gamma_f$ in the $j$th or $k$th argument with the strengthening of a map $g$ in its $l$th argument: the diagrams
\[\begin{tikzcd}[ampersand replacement=\&]
	{(f^{ts} \circ_s g)^s} \& {f^{ts} \circ_s g^t} \&\& {(f^t\circ_t h)^{ts}} \& {(f^t\circ_t h^t)^s} \\
	{(f^{st} \circ_s g)^s} \&\&\& {(f^t\circ_t h)^{st}} \& {f^{ts}\circ_t h^t} \\
	{(f^s \circ_s g)^{ts}} \& {f^{st} \circ_s g^t} \&\& {(f^{ts}\circ_t h)^t} \\
	{(f^s \circ_s g)^{st}} \& {(f^s \circ_s g^t)^t} \&\& {(f^{st}\circ_t h)^t} \& {f^{st}\circ_t h^t}
	\arrow["{\gamma_{f^t\circ_t h}}"', from=1-4, to=2-4]
	\arrow["\sim"', from=2-4, to=3-4]
	\arrow["{\gamma_f\circ_t h^t}", from=2-5, to=4-5]
	\arrow["{(\hat t_{f,h})^s}", from=1-4, to=1-5]
	\arrow["\sim", from=1-5, to=2-5]
	\arrow["{(\gamma_f\circ_t h)^t}"', from=3-4, to=4-4]
	\arrow["{\hat t_{f^s,h}}"', from=4-4, to=4-5]
	\arrow["{\gamma_{f^s \circ_s g}}"', from=3-1, to=4-1]
	\arrow["{(\hat s_{f,g})^t}"', from=4-1, to=4-2]
	\arrow["\sim", from=3-2, to=4-2]
	\arrow["{\gamma_f \circ_s g^t}", from=1-2, to=3-2]
	\arrow["\sim"', from=2-1, to=3-1]
	\arrow["{(\gamma_f \circ_s g)^s}"', from=1-1, to=2-1]
	\arrow["{\hat s_{f^t,g}}", from=1-1, to=1-2]
\end{tikzcd}\] commute for $f : B_1,...,JX,...,JY,...,B_n \to TZ$, $g : C_1,...,JW,...,C_m \to TX$ and $h : D_1,...,JV,...,D_l \to TY$.
    \item [(5)] Braiding axiom relating the six ways to strengthen a map \[f : B_1,...JW,...,JX,...,JY,...,B_n \to TZ\] in all three arguments: the diagram
\[\begin{tikzcd}[ampersand replacement=\&]
	{f^{uts}} \&\& {f^{tus}} \&\& {f^{tsu}} \\
	{f^{ust}} \&\& {f^{sut}} \&\& {f^{stu}}
	\arrow["{(\gamma_f)^s}", from=1-1, to=1-3]
	\arrow["{\gamma_{f^t}}", from=1-3, to=1-5]
	\arrow["{(\gamma_f)^u}", from=1-5, to=2-5]
	\arrow["{\gamma_{f^u}}"', from=1-1, to=2-1]
	\arrow["{(\gamma_f)^t}"', from=2-1, to=2-3]
	\arrow["{\gamma_{f^s}}"', from=2-3, to=2-5]
\end{tikzcd}\] commutes for all $f : B_1,...JW,...,JX,...,JY,...,B_n \to TZ$.
    \end{enumerate}
\end{definition}

\begin{remark}
    When $J$ is the identity, this definition reduces to the definition of pseudocommutativity found in \cite{hp2002} Definition 5. The correspondence between the coherence conditions given here and their conditions is enumerated in the following table:
    \begin{table}[ht]
        \centering
        \begin{tabular}{c|c}
        Relative setting & Hyland \& Power \\
        (1), (2) & 4., 5. \\
        (3), (4) & 6., 7. \\
        (5) & 1., 2., 3.
    \end{tabular}
    \end{table}
\end{remark}

\begin{remark}
    The braiding axiom (5) allows us to extend our notation. Given a map $f : JX_1,...,JX_n \to TY$ and a permutation $\sigma \in S_n$, we can construct maps
    \[f^{t_1...t_n} \to f^{t_{\sigma(1)}...t_{\sigma(n)}}\] as a composite of $\gamma$ maps and their inverses. The braiding axiom (5) tells us that any two such composites of $\gamma$ and $\gamma^{-1}$ maps are equal; we will denote this map by
    \[\gamma_{\sigma;f} : f^{t_1...t_n} \to f^{t_{\sigma(1)}...t_{\sigma(n)}}.\]
\end{remark}

\begin{example}
    The presheaf relative pseudomonad will turn out to be pseudocommutative in this sense; recalling the formula for the multicategorical action of $\Psh$ on 1-cells in Example~\ref{psh is multi psfun}, one should be able to permute the order of strengthenings by means of Fubini isomorphisms for coends. However, proving that $\Psh$ is pseudocommutative directly in this way is challenging; in section 5 we will discuss a property that implies pseudocommutativity and which is much easier to verify.
\end{example}

In Kock \cite{kock1970} we have that a strong monad is lax-monoidal as a functor, and even that the monad unit for a strong monad is a monoidal transformation, but that in order for the monad multiplication (and thus the monad as a whole) to be monoidal, the monad must be commutative. In our setting, every strong relative pseudomonad $T$ has the structure of a multicategorical \emph{pseudofunctor}, and we are now interested in the question of when $T$ further has the structure of a multicategorical \emph{relative pseudomonad} (defined below); that is, when the pseudomonadic structure of $T$ is compatible with the ambient multicategorical structure. We will show in this section that every pseudocommutative relative pseudomonad is a multicategorical relative pseudomonad.

\begin{definition}
    (Multicategorical relative pseudomonad) Let $\C,\D$ be a pair of 2-multicategories and let $T$ be a relative pseudomonad along $J : \D \to \C$. We say $T$ is a \emph{multicategorical relative pseudomonad} if
    \begin{itemize}
        \item $T$ is a pseudo-multifunctor, and
        \item The unit and extension of $T$ are compatible with the multicategorical structure.
    \end{itemize}
    For the second bullet point, we explicitly ask that
    \begin{itemize}
        \item the monad unit $i$ is multicategorical: for each $f : X_1,...,X_n \to Y$ we have an invertible 2-cell \[\bar \imath_f : i_Y \circ Jf \to Tf \circ (i_{X_1},...,i_{X_n}),\]
\[\begin{tikzcd}[ampersand replacement=\&]
	{JX_1,...,JX_n} \& {TX_1,...,TX_n} \\
	JY \& TY
	\arrow["Jf"', from=1-1, to=2-1]
	\arrow["Tf", from=1-2, to=2-2]
	\arrow["i"', from=2-1, to=2-2]
	\arrow["{i,...,i}", from=1-1, to=1-2]
	\arrow["{\bar \imath_f}", shorten <=7pt, shorten >=7pt, Rightarrow, from=2-1, to=1-2]
\end{tikzcd}\]
        \item the monad extension $(-)^*$ is multicategorical: for each 2-cell of the form $\alpha : h \circ Jf \to Tf' \circ (g_1,...,g_n) $:
\[\begin{tikzcd}[ampersand replacement=\&]
	{JX_1,...,JX_n} \&\& {TX_1',...,TX_n'} \\
	JY \&\& {TY'}
	\arrow["Jf"', from=1-1, to=2-1]
	\arrow["{Tf'}", from=1-3, to=2-3]
	\arrow["h"', from=2-1, to=2-3]
	\arrow["{g_1,...,g_n}", from=1-1, to=1-3]
	\arrow["\alpha", shorten <=12pt, shorten >=12pt, Rightarrow, from=2-1, to=1-3]
\end{tikzcd}\] we have a 2-cell $\alpha^* :h^* \circ Tf \to  Tf' \circ (g_1^*,...,g_n^*)$ fitting into the square
\[\begin{tikzcd}[ampersand replacement=\&]
	{TX_1,...,TX_n} \&\& {TX_1',...,TX_n'} \\
	TY \&\& {TY'}
	\arrow["Tf"', from=1-1, to=2-1]
	\arrow["{Tf'}", from=1-3, to=2-3]
	\arrow["{h^*}"', from=2-1, to=2-3]
	\arrow["{g_1^*,...,g_n^*}", from=1-1, to=1-3]
	\arrow["{\alpha^*}", shorten <=12pt, shorten >=12pt, Rightarrow, from=2-1, to=1-3]
\end{tikzcd}\]
    \end{itemize}
    These must satisfy three coherence conditions (one for each of the families of 2-cells making $T$ a relative pseudomonad).
    \begin{enumerate}
        \item[(1)] Compatibility with $\eta$: given a 2-cell $\alpha : h \circ Jf \to Tf' \circ (g_1,...,g_n) $, the composite
\[\begin{tikzcd}[ampersand replacement=\&]
	{JX_1,...,JX_n} \&\& {TX_1,...,TX_n} \&\& {TX_1',...,TX_n'} \\
	JY \&\& TY \&\& {TY'}
	\arrow["Tf"', from=1-3, to=2-3]
	\arrow["{Tf'}", from=1-5, to=2-5]
	\arrow["{h^*}"', from=2-3, to=2-5]
	\arrow["{g_1^*,...,g_n^*}", from=1-3, to=1-5]
	\arrow["{\alpha^*}", shorten <=12pt, shorten >=12pt, Rightarrow, from=2-3, to=1-5]
	\arrow["{i,...,i}", from=1-1, to=1-3]
	\arrow["i"', from=2-1, to=2-3]
	\arrow["Jf"', from=1-1, to=2-1]
	\arrow["{\bar \imath_f}", shorten <=12pt, shorten >=12pt, Rightarrow, from=2-1, to=1-3]
	\arrow[""{name=0, anchor=center, inner sep=0}, "h"', curve={height=30pt}, from=2-1, to=2-5]
	\arrow["{\eta_h}", shorten <=3pt, shorten >=3pt, Rightarrow, from=0, to=2-3]
\end{tikzcd}\] is equal to the composite
\[\begin{tikzcd}[ampersand replacement=\&]
	\& {TX_1,...,TX_n} \\
	{JX_1,...,JX_n} \&\& {TX_1',...,TX_n'} \\
	JY \&\& {TY'}
	\arrow[""{name=0, anchor=center, inner sep=0}, "{g_1,...,g_n}"{description}, from=2-1, to=2-3]
	\arrow["Jf"', from=2-1, to=3-1]
	\arrow["{Tf'}", from=2-3, to=3-3]
	\arrow["h"', from=3-1, to=3-3]
	\arrow["\alpha", shorten <=18pt, shorten >=18pt, Rightarrow, from=3-1, to=2-3]
	\arrow["{i,...,i}", curve={height=-12pt}, from=2-1, to=1-2]
	\arrow["{g_1^*,...,g_n^*}", curve={height=-12pt}, from=1-2, to=2-3]
	\arrow["{\eta_{g_1},...,\eta_{g_n}}", shorten <=3pt, Rightarrow, from=0, to=1-2]
\end{tikzcd}\]
    \item[(2)] Compatibility with $\mu$: given 2-cells $\alpha : Tf' \circ (g_1^*,...,g_n^*) \to h \circ Jf$ and $\beta : Tf'' \circ (g_1'^*,...,g_n'^*) \to h' \circ Jf'$, the composite
\[\begin{tikzcd}[ampersand replacement=\&]
	{TX_1,...,TX_n} \&\& {TX_1',...,TX_n'} \&\& {TX_1'',...,TX_n''} \\
	TY \&\& {TY'} \&\& {TY''}
	\arrow["{g_1^*,...,g_n^*}", from=1-1, to=1-3]
	\arrow["Tf"', from=1-1, to=2-1]
	\arrow["{Tf'}", from=1-3, to=2-3]
	\arrow["{h^*}"', from=2-1, to=2-3]
	\arrow["{\alpha^*}", shorten <=12pt, shorten >=12pt, Rightarrow, from=2-1, to=1-3]
	\arrow["{g_1'^*,...,g_n'^*}", from=1-3, to=1-5]
	\arrow["{Tf''}", from=1-5, to=2-5]
	\arrow["{h'^*}"', from=2-3, to=2-5]
	\arrow["{\beta^*}", shorten <=12pt, shorten >=12pt, Rightarrow, from=2-3, to=1-5]
	\arrow[""{name=0, anchor=center, inner sep=0}, "{(h'^*h)^*}"', curve={height=30pt}, from=2-1, to=2-5]
	\arrow["{\mu_{h',h}}", shorten <=3pt, Rightarrow, from=0, to=2-3]
\end{tikzcd}\] is equal to the composite
\[\begin{tikzcd}[ampersand replacement=\&]
	\&\& {TX_1',...,TX_n'} \\
	{TX_1,...,TX_n} \&\&\&\& {TX_1'',...,TX_n''} \\
	TY \&\&\&\& {TY''}
	\arrow["Tf"', from=2-1, to=3-1]
	\arrow["{Tf''}", from=2-5, to=3-5]
	\arrow[""{name=0, anchor=center, inner sep=0}, "{(g_1'^*g_1)^*,...,(g_n'^*g_n)^*}"{description}, from=2-1, to=2-5]
	\arrow["{(h'^*h)^*}"', from=3-1, to=3-5]
	\arrow["{(\beta^*\alpha)^*}", shorten <=31pt, shorten >=31pt, Rightarrow, from=3-1, to=2-5]
	\arrow["{g_1^*,...,g_n^*}", curve={height=-12pt}, from=2-1, to=1-3]
	\arrow["{g_1'^*,...,g_n'^*}", curve={height=-12pt}, from=1-3, to=2-5]
	\arrow["{\mu_{g_1',g_1},...,\mu_{g_n',g_n}}", shorten <=3pt, Rightarrow, from=0, to=1-3]
\end{tikzcd}\] (where for clarity we have omitted whiskerings from the 2-cell here written $(\beta^*\alpha)^*$), and
    \item[(3)] Compatibility with $\theta$: given $f : X_1,...,X_n \to Y$, the composite
\[\begin{tikzcd}[ampersand replacement=\&]
	{TX_1,...,TX_n} \&\& {TX_1,...,TX_n} \\
	TY \&\& TY
	\arrow["Tf", from=1-3, to=2-3]
	\arrow["Tf"', from=1-1, to=2-1]
	\arrow["{i^*}"', from=2-1, to=2-3]
	\arrow[""{name=0, anchor=center, inner sep=0}, "{i^*,...,i^*}"{description}, from=1-1, to=1-3]
	\arrow["{\bar \imath_f^*}", shorten <=12pt, shorten >=12pt, Rightarrow, from=2-1, to=1-3]
	\arrow[""{name=1, anchor=center, inner sep=0}, "{1,...,1}", curve={height=-30pt}, from=1-1, to=1-3]
	\arrow["{\theta,...,\theta}", shorten <=4pt, shorten >=4pt, Rightarrow, from=0, to=1]
\end{tikzcd}\] is equal to the composite
\[\begin{tikzcd}[ampersand replacement=\&]
	{TX_1,...,TX_n} \&\& {TX_1,...,TX_n} \\
	TY \&\& TY
	\arrow["Tf", from=1-3, to=2-3]
	\arrow["Tf"', from=1-1, to=2-1]
	\arrow[""{name=0, anchor=center, inner sep=0}, "1", from=2-1, to=2-3]
	\arrow["{1,...,1}", from=1-1, to=1-3]
	\arrow[""{name=1, anchor=center, inner sep=0}, "{i^*}"', curve={height=30pt}, from=2-1, to=2-3]
	\arrow["\theta"', shorten <=4pt, shorten >=4pt, Rightarrow, from=1, to=0]
\end{tikzcd}\]
    \end{enumerate}
\end{definition}

\begin{remark}
    In the one-dimensional, monoidal setting and when $J$ is the identity, this definition reduces to the notion of a monoidal monad.
\end{remark}

In \cite{kock1970} it is noted that the monad unit of a strong monad is always a monoidal transformation, but the monad multiplication is only a monoidal transformation if the monad is commutative. We shall see in the following proposition an analogous result: that for every strong relative pseudomonad, the monad unit is multicategorical (we can define the invertible 2-cells $\bar \imath_f$), but in order to make the monad extension multicategorical we require the relative pseudomonad to be pseudocommutative. 

\begin{theorem}\label{pscom=>multmonad}
    Let $T$ be a strong relative pseudomonad along multicategorical 2-functor $J : \D \to \C$. Suppose $T$ is pseudocommutative. Then $T$ is a multicategorical relative pseudomonad.
\end{theorem}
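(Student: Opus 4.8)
The plan is to establish the two remaining pieces of multicategorical structure and verify three coherence conditions, leaning on the fact (Proposition~\ref{param=>psfunctor}) that $T$ is already a pseudo-multifunctor. First I would construct the multicategorical unit 2-cells $\bar\imath_f$. Given $f : X_1,...,X_n \to Y$, I need an invertible 2-cell $i_Y \circ Jf \to Tf \circ (i_{X_1},...,i_{X_n})$; since $Tf = \bar f^{t_1...t_n}$ and $\bar f = i_Y \circ Jf$, this is obtained by applying the unit isomorphisms $\tilde t$ once in each argument, strengthening $\bar f$ in turn while precomposing with $i$ in that slot. Crucially, the construction of $\bar\imath_f$ uses only $\tilde t$ and pseudonaturality, so it does \emph{not} require pseudocommutativity — mirroring the classical fact that the unit of a strong monad is always monoidal. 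I would note that this part is essentially forced and its well-definedness follows from the coherence of the $\tilde t$ family.

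Next I would construct the multicategorical extension operation $\alpha \mapsto \alpha^*$. Given $\alpha : h \circ Jf \to Tf' \circ (g_1,...,g_n)$, I need $\alpha^* : h^* \circ Tf \to Tf' \circ (g_1^*,...,g_n^*)$. The idea is to strengthen $\alpha$ in each argument: applying $(-)^{t_i}$ (i.e. the strengthenings used to build $Tf$ and the $g_k^*$) turns each $g_k$ into $g_k^*$ and turns $Jf$-composites into $Tf$-composites, while $h$ on the unary side becomes $h^*$. Concretely $\alpha^*$ is assembled from the strength functors applied to $\alpha$, together with the coherence cells $\hat t$ that let the strengthened composite $(Tf' \circ (g_1,...,g_n))^{t_1...t_n}$ be rewritten as $Tf' \circ (g_1^*,...,g_n^*)$. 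This is where the order of strengthening becomes relevant: building $\alpha^*$ requires permuting strengthenings past one another, and it is exactly here that the pseudocommutativity 2-cells $\gamma$ (and the composite $\gamma_{\sigma;f}$ from the braiding axiom) must be inserted to make the definition independent of the chosen order and to land in the required target.

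I would then verify the three coherence conditions in turn. Condition (1), compatibility with $\eta$, compares the extension $\alpha^*$ whiskered by $\bar\imath_f$ and $\eta_h$ against $\alpha$ whiskered by the $\eta_{g_k}$; I expect this to reduce, after rewriting everything in terms of $\tilde t$, $\hat t$ and pseudonaturality, to Lemma~\ref{param3moreeqs}(1) applied in each argument together with the coherence conditions (1),(2) of Definition~\ref{pscom} governing how $\gamma$ interacts with $\tilde s,\tilde t$. Condition (2), compatibility with $\mu$, is the analogue for composition of extensions and will use $\hat t$, the associativity axiom of Definition~\ref{paramrelpsmonad}(1), and the $\gamma$-versus-$\hat s,\hat t$ conditions (3),(4) of Definition~\ref{pscom}. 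Condition (3), compatibility with $\theta$, should follow from Definition~\ref{paramrelpsmonad}(2) together with the unit condition (2) of Definition~\ref{pscom} relating $\gamma$ to $\theta$ via $\tilde t$.

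The main obstacle is the construction and well-definedness of $\alpha^*$ and the bookkeeping in conditions (2) and (3). Each strengthening produces a choice of order, and the coherence cells $\hat t$ and pseudonaturality isomorphisms proliferate across the $n$ arguments; the essential content is that the braiding axiom (5) of Definition~\ref{pscom} (via $\gamma_{\sigma;f}$) guarantees that any two ways of reordering the $n$ strengthenings agree, so that $\alpha^*$ is unambiguous and the coherence diagrams close up. I would therefore present $\alpha^*$ explicitly for general $n$ using the $\gamma_{\sigma;f}$ notation, reduce each coherence condition to its unary or binary prototype, and invoke the matching clause of Definition~\ref{pscom} together with Lemma~\ref{param3moreeqs}; the full $n$-ary diagram chases are routine once the binary case is settled, so I would state that they follow by the same pattern rather than writing them out.
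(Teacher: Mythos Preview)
Your plan matches the paper's proof: the same construction of $\bar\imath_f$ via iterated $\tilde t$ (correctly noting that pseudocommutativity is not needed here), the same construction of $\alpha^*$ by unwinding $h^*\circ Tf$ via $\hat t^{-1}$ and $\tilde t^{-1}$, applying $\alpha^{t_1\dots t_n}$, and then rewinding with alternating $\gamma_\sigma$ and $\hat t$ moves, and the same reduction of the coherence checks to the binary case.

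One correction to your axiom bookkeeping: the paper finds that the $\eta$-compatibility (condition (1)) needs only Lemma~\ref{param3moreeqs}(1) together with naturality squares, while the pseudocommutativity unit axioms (1),(2) of Definition~\ref{pscom} are what is actually required for the $\theta$-compatibility (condition (3)), alongside Lemma~\ref{param3moreeqs}(2),(3) and Definition~\ref{paramrelpsmonad}(2). Your predictions for which pseudocommutativity clauses handle conditions (1) and (3) are essentially swapped; condition (2) you have right (pentagon plus Definition~\ref{pscom}(3),(4)).
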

\begin{proof}
    By Proposition~\ref{param=>psfunctor} we know that $T$ is a pseudo-multifunctor. We must check that the monad unit and extension are compatible with the multicategorical structure. For the unit, we need to find invertible 2-cells $\bar \imath_f$ of shape
    \[i \circ Jf \to Tf \circ (i,...,i)\] for $f : X_1,...,X_n \to Y$. Since $Tf := (i \circ Jf)^{t_1...t_n} = \bar f^{t_1...t_n}$, we construct $\bar \imath_f$ as the composite
    \begin{align*}
        i \circ Jf = \bar f &\xrightarrow{\tilde t} \bar f^{t_1} \circ (i,1,...,1)\\
        &\xrightarrow{\tilde t} \bar f^{t_1t_2} \circ (i,i,1,...,1)\\
        &\vdots\\
        &\xrightarrow{\tilde t} \bar f^{t_1t_2...t_n} \circ (i,i,i,...,i) = Tf \circ (i,...,i).
    \end{align*}
    Note that we do not need the pseudocommutativity to construct the $\bar \imath_f$ 2-cells. The construction of $\alpha^*$ given $\alpha : h \circ Jf \to Tf' \circ (g_1,...,g_n)$ is more involved. We require a 2-cell of shape
    \[h^* \circ Tf \to Tf' \circ (g_1^*,...,g_n^*).\] We begin with the composite
    \begin{align*}
        h^* \circ Tf := h^t \circ \bar f^{t_1...t_n} &\xrightarrow{\hat t^{-1}} (h^t \circ \bar f^{t_1...t_{n-1}})^{t_n}\\
        &\xrightarrow{\hat t^{-1}} (h^t \circ \bar f^{t_1...t_{n-2}})^{t_{n-1}t_n}\\
        &\vdots\\
        &\xrightarrow{\hat t^{-1}} (h^t \circ \bar f)^{t_1...t_n}\\
        &\xrightarrow{\tilde t^{-1}} (h \circ Jf)^{t_1...t_n},
    \end{align*} at which point we can compose with $\alpha^{t_1...t_n}$ to arrive at
    \[(Tf' \circ (g_1,...,g_n))^{t_1...t_n} := (\bar f'^{t_1....t_n} \circ (g_1,...,g_n))^{t_1...t_n}.\] From here we start needing the pseudocommutativity of $T$. Let $\sigma \in S_n$ be the cyclic permutation $1 \to 2 \to ... \to n \to 1$. Now we compose as follows:
    \begin{align*}
        &(\bar f'^{t_1....t_n} \circ (g_1,...,g_n))^{t_1...t_n}\\
        &\xrightarrow{\gamma_\sigma} (\bar f'^{t_2....t_1} \circ (g_1,...,g_n))^{t_1...t_n} \xrightarrow{\hat t} (\bar f'^{t_2....t_1} \circ (g_1^t,g_2,...,g_n))^{t_2...t_n}\\
        &\xrightarrow{\gamma_\sigma} (\bar f'^{t_3....t_2} \circ (g_1^t,g_2,...,g_n))^{t_2...t_n} \xrightarrow{\hat t} (\bar f'^{t_3....t_2} \circ (g_1^t,g_2^t,g_3,...,g_n))^{t_3...t_n}\\
        &\vdots\\
        &\xrightarrow{\gamma_\sigma} (\bar f'^{t_1....t_n} \circ (g_1^t,...,g_{n-1}^t,g_t))^{t_n} \xrightarrow{\hat t} \bar f'^{t_1....t_n} \circ (g_1^t,...,,g_n^t)\\
        &= Tf' \circ (g_1^*,...,g_n^*).
    \end{align*}
    For example, the full composite in the case where $f$ is a binary map is given by the diagram below:
\[\begin{tikzcd}[ampersand replacement=\&]
	{h^t \circ \bar f^{st}} \&\& {f'^{st} \circ (g_1^*,g_2^*)} \\
	\&\& {(f'^{st} \circ (g_1^*,g_2))^{t}} \\
	{(h^t \circ \bar f^s)^t} \&\& {(f'^{ts} \circ (g_1^*,g_2))^{t}} \\
	\&\& {(f'^{ts} \circ (g_1,g_2))^{st}} \\
	{(h^t \circ \bar f)^{st}} \& {(h \circ Jf)^{st}} \& {(f'^{st} \circ (g_1,g_2))^{st}}
	\arrow["{\hat t^{-1}}"', from=1-1, to=3-1]
	\arrow["{\hat s^{-1}}"', from=3-1, to=5-1]
	\arrow["{\tilde t^{-1}}"', from=5-1, to=5-2]
	\arrow["\alpha"', from=5-2, to=5-3]
	\arrow["{\gamma^{-1}}"', from=5-3, to=4-3]
	\arrow["{\hat s}"', from=4-3, to=3-3]
	\arrow["\gamma"', from=3-3, to=2-3]
	\arrow["{\hat t}"', from=2-3, to=1-3]
	\arrow["{\alpha^*}", dashed, from=1-1, to=1-3]
\end{tikzcd}\]

It now remains to verify that the three coherence conditions for a multicategorical relative pseudomonad. Here we shall only do this for binary maps, and we shall abbreviate the diagram chasing.

For the first condition, we begin with the composite 2-cell
\[\begin{tikzcd}[ampersand replacement=\&]
	{h \circ Jf} \& {h^* \circ i \circ Jf} \& {h^* \circ Tf \circ (i,i)} \& {Tf' \circ (g_1^*,g_2^*) \circ (i,i)}
	\arrow["\eta", from=1-1, to=1-2]
	\arrow["{\bar \imath}", from=1-2, to=1-3]
	\arrow["{\alpha^*}", from=1-3, to=1-4]
\end{tikzcd}\] and must show that it is equal to the composite
\[\begin{tikzcd}[ampersand replacement=\&]
	{h \circ Jf} \& {Tf' \circ (g_1,g_2)} \& {Tf' \circ (g_1^*,g_2^*) \circ (i,i).}
	\arrow["\alpha", from=1-1, to=1-2]
	\arrow["{\eta,\eta}", from=1-2, to=1-3]
\end{tikzcd}\] Rewriting $\bar \imath_f$ and $\alpha^*$ in terms of our constructions, we must show that the diagram
\begin{small}
\[\begin{tikzcd}[ampersand replacement=\&]
	{h \circ Jf} \& {h^t \circ \bar f} \& {h^t \circ \bar f^s \circ (i,1)} \& {h^t \circ \bar f^{st} \circ (i,i)} \\
	\&\&\& {(h^t \circ \bar f^{s})^t \circ (i,i)} \\
	\&\&\& {(h^t \circ \bar f)^{st} \circ (i,i)} \\
	\&\&\& {(h \circ Jf)^{st} \circ (i,i)} \\
	\&\&\& {(\bar f'^{st} \circ (g_1,g_2))^{st} \circ (i,i)} \\
	\&\&\& {(\bar f'^{ts} \circ (g_1,g_2))^{st} \circ (i,i)} \\
	\&\&\& {(\bar f'^{ts} \circ (g_1^t,g_2))^t \circ (i,i)} \\
	\&\&\& {(\bar f'^{st} \circ (g_1^t,g_2))^t \circ (i,i)} \\
	{\bar f'^{st} \circ (g_1,g_2)} \&\& {\bar f'^{st} \circ (g_1^t,g_2) \circ (i,1)} \& {\bar f'^{st} \circ (g_1^t,g_2^t) \circ (i,i)}
	\arrow["{\tilde t}", from=1-1, to=1-2]
	\arrow["\alpha"', from=1-1, to=9-1]
	\arrow["{\tilde s}", from=1-2, to=1-3]
	\arrow["{\tilde t}", from=1-3, to=1-4]
	\arrow["{\hat t^{-1}}", from=1-4, to=2-4]
	\arrow["{\hat s^{-1}}", from=2-4, to=3-4]
	\arrow["{\tilde t^{-1}}", from=3-4, to=4-4]
	\arrow["{\gamma^{-1}}", from=5-4, to=6-4]
	\arrow["\alpha", from=4-4, to=5-4]
	\arrow["{\hat s}", from=6-4, to=7-4]
	\arrow["\gamma", from=7-4, to=8-4]
	\arrow["{\hat t}", from=8-4, to=9-4]
	\arrow["{\tilde t}"', from=9-1, to=9-3]
	\arrow["{\tilde t}"', from=9-3, to=9-4]
\end{tikzcd}\]
\end{small}  commutes. We can fill in this diagram, aside from naturality squares, with four instances of equality (1) from Lemma~\ref{param3moreeqs}. So indeed the first coherence condition holds.

For the second coherence condition, we begin with the composite 2-cell
\[\begin{tikzcd}[ampersand replacement=\&]
	{(h'^* \circ h)^* \circ Tf} \& {h'^* \circ h^* \circ Tf} \& {h'^* \circ Tf' \circ (g_1^*,g_2^*)} \\
	\&\& {Tf'' \circ (g_1'^*,g_2'^*) \circ (g_1^*,g_2^*)}
	\arrow["\mu", from=1-1, to=1-2]
	\arrow["{\alpha^*}", from=1-2, to=1-3]
	\arrow["{\beta^*}", from=1-3, to=2-3]
\end{tikzcd}\] and must show that it is equal to the composite
\[\begin{tikzcd}[ampersand replacement=\&]
	{(h'^* \circ h)^* \circ Tf} \& {Tf'' \circ ((g_1'^*g_1)^*,(g_2'^*g_2)^*)} \& {Tf'' \circ (g_1'^*,g_2'^*) \circ (g_1^*,g_2^*).}
	\arrow["{(\beta^*\alpha)^*}", from=1-1, to=1-2]
	\arrow["{\mu,\mu}", from=1-2, to=1-3]
\end{tikzcd}\] Unwrapping our definitions, we need to show that the diagram
\begin{small}
\[\begin{tikzcd}[ampersand replacement=\&]
	{(h'^t \circ h)^t \circ \bar f^{st}} \&\& {h'^t \circ h^t \circ \bar f^{st}} \\
	{((h'^t \circ h)^t \circ \bar f^s)^t} \&\& {h'^t \circ (h^t \circ \bar f^s)^t} \\
	{((h'^t \circ h)^t \circ \bar f)^{st}} \&\& {h'^t \circ (h^t \circ \bar f)^{st}} \\
	{(h'^t \circ h \circ Jf)^{st}} \&\& {h'^t \circ (h \circ Jf)^{st}} \\
	{(h'^t \circ \bar f'^{st} \circ (g_1,g_2))^{st}} \&\& {h'^t \circ (\bar f'^{st} \circ (g_1,g_2))^{st}} \\
	{(\bar f''^{st} \circ (g_1'^tg_1,g_2'^tg_2))^{st}} \&\& {h'^t \circ (\bar f'^{ts} \circ (g_1,g_2))^{st}} \\
	{(\bar f''^{ts} \circ (g_1'^tg_1,g_2'^tg_2))^{st}} \&\& {h'^t \circ (\bar f'^{ts} \circ (g_1^t,g_2))^t} \\
	{(\bar f''^{ts} \circ ((g_1'^tg_1)^t,g_2'^tg_2))^t} \&\& {h'^t \circ (\bar f'^{ts} \circ (g_1^t,g_2))^t} \\
	{(\bar f''^{st} \circ ((g_1'^tg_1)^t,g_2'^tg_2))^t} \&\& {h'^t \circ \bar f'^{ts} \circ (g_1^t,g_2^t)} \\
	{ \bar f''^{st} \circ ((g_1'^tg_1)^t,(g_2'^tg_2)^t)} \& { \bar f''^{st} \circ (g_1'^tg_1^t,(g_2'^tg_2)^t)} \& {\bar f''^{ts} \circ (g_1'^t,g_2'^t) \circ (g_1^t,g_2^t)}
	\arrow["{\hat t}"', from=10-1, to=10-2]
	\arrow["{\hat t^{-1}}"', from=1-1, to=2-1]
	\arrow["{\hat s^{-1}}"', from=2-1, to=3-1]
	\arrow["{\tilde t^{-1}}"', from=3-1, to=4-1]
	\arrow["\alpha"', from=4-1, to=5-1]
	\arrow["{\beta^*}"', from=5-1, to=6-1]
	\arrow["{\gamma^{-1}}"', from=6-1, to=7-1]
	\arrow["{\hat s}"', from=7-1, to=8-1]
	\arrow["\gamma"', from=8-1, to=9-1]
	\arrow["{\hat t}"', from=9-1, to=10-1]
	\arrow["{\hat t}", from=1-1, to=1-3]
	\arrow["{\hat t^{-1}}", from=1-3, to=2-3]
	\arrow["{\hat s^{-1}}", from=2-3, to=3-3]
	\arrow["{\tilde t^{-1}}", from=3-3, to=4-3]
	\arrow["\alpha", from=4-3, to=5-3]
	\arrow["{\gamma^{-1}}", from=5-3, to=6-3]
	\arrow["{\hat s}", from=6-3, to=7-3]
	\arrow["\gamma", from=7-3, to=8-3]
	\arrow["{\hat t}", from=8-3, to=9-3]
	\arrow["{\beta^*}", from=9-3, to=10-3]
	\arrow["{\hat t}"', from=10-2, to=10-3]
\end{tikzcd}\]
\end{small} commutes. Filling this diagram is involved, but aside from naturality squares we require only
\begin{itemize}
    \item five instances of the pentagon axiom (1) from Definition~\ref{paramrelpsmonad}, and 
    \item axioms (3) and (4) from Definition~\ref{pscom}.
\end{itemize}
Thus also the second coherence condition holds.

For the third and final coherence condition, we begin with the composite 2-cell
\[\begin{tikzcd}[ampersand replacement=\&]
	{i^* \circ Tf} \& {Tf\circ(i,i)} \& Tf
	\arrow["{\bar\imath_f^*}", from=1-1, to=1-2]
	\arrow["{\theta,\theta}", from=1-2, to=1-3]
\end{tikzcd}\] and must show that it is equal to 
\[\begin{tikzcd}[ampersand replacement=\&]
	{i^* \circ Tf} \& Tf.
	\arrow["\theta", from=1-1, to=1-2]
\end{tikzcd}\] Rewriting everything in our terms shows that we must show the diagram
\[\begin{tikzcd}[ampersand replacement=\&]
	{i^t \circ \bar f^{st}} \&\& {\bar f^{st}} \\
	{(i^t \circ \bar f^s)^t} \&\& {\bar f^{st} \circ_s i^t} \\
	{(i^t \circ \bar f)^{st}} \&\& {\bar f^{st} \circ (i^t,i^t)} \\
	{(i \circ Jf)^{st}} \&\& {(\bar f^{st} \circ (i^t,i))^t} \\
	{(\bar f^s \circ_s i)^{st}} \&\& {(\bar f^{ts} \circ (i^t,i))^t} \\
	{(\bar f^{st} \circ (i,i))^{st}} \&\& {(\bar f^{ts} \circ (i,i))^{st}}
	\arrow["{\hat t^{-1}}"', from=1-1, to=2-1]
	\arrow["{\hat s^{-1}}"', from=2-1, to=3-1]
	\arrow["{\tilde t^{-1}}"', from=3-1, to=4-1]
	\arrow["{\tilde s}"', from=4-1, to=5-1]
	\arrow["{\tilde t}"', from=5-1, to=6-1]
	\arrow["{\gamma^{-1}}"', from=6-1, to=6-3]
	\arrow["{\hat s}"', from=6-3, to=5-3]
	\arrow["\gamma"', from=5-3, to=4-3]
	\arrow["{\hat t}"', from=4-3, to=3-3]
	\arrow["\theta", from=1-1, to=1-3]
	\arrow["\theta"', from=2-3, to=1-3]
	\arrow["\theta"', from=3-3, to=2-3]
\end{tikzcd}\] commutes. Filling the diagram requires, aside from naturality squares:
\begin{itemize}
    \item instances of equalities (2) and (3) from Lemma~\ref{param3moreeqs},
    \item two uses of axiom (2) from Definition~\ref{paramrelpsmonad}, and
    \item axioms (1) and (2) from Definition~\ref{pscom}.
\end{itemize}
Hence the final coherence condition is satisfied, and thus we have shown that every pseudocommutative relative pseudomonad is a multicategorical relative pseudomonad.
\end{proof}

As the above proof demonstrates, working directly with pseudocommutativity and multicategoricality can be tedious. In the next section we will examine a condition on a relative pseudomonad which both implies pseudocommutativity and which is much easier to verify, being characterised by a universal property.

\section{Lax idempotency}

We will now consider a special class of relative pseudomonads. Defined in \cite{fghw}, the \emph{lax-idempotent} relative pseudomonad generalises the notion of a lax-idempotent or Kock-Zöberlein 2-monad, discussed extensively in \cite{kock1995}. The aim of this section is to generalise the result of López Franco in \cite{lopezfranco2011} that every lax-idempotent 2-monad is pseudocommutative.

First, we recall the definition of lax-idempotent relative pseudomonad from \cite{fghw}.

\begin{definition}
    (Lax-idempotent relative pseudomonad) Let $T : \D \to \C$ be a relative pseudomonad along $J : \D \to \C$. We say that $T$ is a \emph{lax-idempotent relative pseudomonad} if `monad structure is left adjoint to unit', which is to say that we have an adjunction
\[\begin{tikzcd}[ampersand replacement=\&]
	{\C(JX,TY)} \&\& {\C(TX,TY)}
	\arrow[""{name=0, anchor=center, inner sep=0}, "{- \circ i}", curve={height=-12pt}, from=1-3, to=1-1]
	\arrow[""{name=1, anchor=center, inner sep=0}, "{(-)^*}", curve={height=-12pt}, from=1-1, to=1-3]
	\arrow["\dashv"{anchor=center, rotate=-90}, draw=none, from=1, to=0]
\end{tikzcd}\] for all objects $X,Y$ of $\D$, whose unit $- \implies (-)^*i$ has components given by the $\eta_f : f \to f^*i$ from the pseudomonadic structure (note in particular that the unit is thus invertible).
\end{definition}

\begin{remark}\label{laxid via kan}
    The definition of lax idempotency is given equivalently in \cite{fghw} in terms of Kan extensions: $T$ is lax-idempotent if for all maps $f : JX \to TY$ the diagram
\[\begin{tikzcd}[ampersand replacement=\&]
	JX \& TX \\
	\& TY
	\arrow[""{name=0, anchor=center, inner sep=0}, "{f^*}", from=1-2, to=2-2]
	\arrow[""{name=1, anchor=center, inner sep=0}, "f"', from=1-1, to=2-2]
	\arrow["i", from=1-1, to=1-2]
	\arrow["{\eta_f}", shorten <=3pt, shorten >=3pt, Rightarrow, from=1, to=0]
\end{tikzcd}\] exhibits $f^*$ as the left Kan extension of $f$ along $i$. This form of the definition makes it immediate from the construction of $\Psh$ as a relative pseudomonad that $\Psh$ is lax-idempotent.
\end{remark}

We turn to showing that every lax-idempotent relative pseudomonad is pseudocommutative. Just as in Section 3 we defined the notion of strong relative pseudomonad for the multicategorical setting, we will define the notion of \emph{lax-idempotent strong relative pseudomonad} as follows: 

\begin{definition}
    (Lax-idempotent strong relative pseudomonad) Let $J : \D \to \C$ be a pseudo-multifunctor and let $T$ be a strong relative pseudomonad along $J$. We say $T$ is a \emph{lax-idempotent strong relative pseudomonad} if the strength is left adjoint to precomposition with the unit. That is, we have an adjunction
\[\begin{tikzcd}[ampersand replacement=\&]
	{\C(B_1,...,JX,...,B_n;TY)} \&\& {\C(B_1,...,TX,...,B_n;TY)}
	\arrow[""{name=0, anchor=center, inner sep=0}, "{(-)^{t_j}}", curve={height=-12pt}, from=1-1, to=1-3]
	\arrow[""{name=1, anchor=center, inner sep=0}, "{- \circ_j  i_X}", curve={height=-12pt}, from=1-3, to=1-1]
	\arrow["\dashv"{anchor=center, rotate=-90}, draw=none, from=0, to=1]
\end{tikzcd}\] for every $1 \leq j \leq n$ and objects $B_1,...,B_{j-1},JX,B_{j+1},...,B_n;TY$ whose unit $- \implies (-)^{t_j} \circ_j i$ has components \[\tilde t_{f} : f \to f^{t_j} \circ_j i_{X}\] obtained from the strong structure (again the unit is invertible).
\end{definition}

As in Remark~\ref{laxid via kan} above, we can equivalently state this condition in terms of left Kan extensions: $T$ is lax-idempotent strong if for every map $f : B_1,...,JX,...,B_n \to TY$ the diagram
\[\begin{tikzcd}[ampersand replacement=\&]
	{B_1,...,JX,...,B_n} \&\& {B_1,...,TX,...,B_n} \\
	\\
	\&\& TY
	\arrow[""{name=0, anchor=center, inner sep=0}, "{f^{t_j}}", from=1-3, to=3-3]
	\arrow["{1,...,i,...,1}", from=1-1, to=1-3]
	\arrow[""{name=1, anchor=center, inner sep=0}, "f"', from=1-1, to=3-3]
	\arrow["{\tilde t_f}", shorten <=10pt, shorten >=10pt, Rightarrow, from=1, to=0]
\end{tikzcd}\] exhibits $f^{t_j}$ as the left Kan extension of $f$ along $1,...,i,...,1$. As a point of notation, we will use Greek letters to denote the \emph{counit} of the lax idempotency adjunction; where the strengthening map is called $(-)^t$ and the unit $\tilde t$, the counit will be called
\[\tau_f : (f \circ_t i)^t \to f,\] and where the strengthening is called $(-)^s$ and the unit $\tilde s$, the counit shall be called \[\sigma_f : (f \circ_s i)^s \to f\] (and similarly for $(-)^u$ etc.).

Note that there is much less data to check in the course of showing that a relative pseudomonad is lax-idempotent compared with showing that it is pseudocommutative. The following result generalising \cite{lopezfranco2011} therefore gives us a shortcut for showing relative pseudomonads like $\Psh$ are pseudocommutative (and hence by Theorem~\ref{pscom=>multmonad} a multicategorical relative pseudomonad).

\begin{theorem}\label{laxid=>pscom}
    Let $T : \D \to \C$ be a lax-idempotent strong relative pseudomonad. Then $T$ is pseudocommutative, with a pseudocommutativity whose components $\gamma_g : g^{ts} \to g^{st}$ are given by the composite \[g^{ts} \xrightarrow{(\tilde s_g)^{ts}} (g^s \circ_s i)^{ts} \xrightarrow{\sim} (g^{st} \circ_s i)^s \xrightarrow{\sigma_{g^{st}}} g^{st}.\]
\end{theorem}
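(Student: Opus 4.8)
The plan is to exploit the defining universal property of lax idempotency — that each strengthening $(-)^{t_j}$ is a left adjoint with invertible unit, equivalently (as in Remark~\ref{laxid via kan}) a left Kan extension along the unit insertion $(1,\dots,i,\dots,1)$ — and to reduce every claim to uniqueness of adjoints. The single structural observation that drives everything is that strengthening in two \emph{distinct} slots arises from two composites of adjunctions sharing the same right adjoint. Writing $L_s = (-)^s$, $R_s = -\circ_s i$ and $L_t=(-)^t$, $R_t=-\circ_t i$, adjunction composition gives $L_tL_s \dashv R_sR_t$ and $L_sL_t\dashv R_tR_s$, where $L_tL_s\,g = g^{st}$ and $L_sL_t\,g = g^{ts}$. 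Since precomposing with unit maps in two different arguments commutes, $R_sR_t$ and $R_tR_s$ are canonically isomorphic via the anonymous pseudonaturality isomorphism $(a\circ_s i)\circ_t i \cong (a\circ_t i)\circ_s i$. Hence $(-)^{ts}$ and $(-)^{st}$ are both left adjoint to the same functor, and uniqueness of adjoints supplies a canonical invertible comparison $g^{ts}\to g^{st}$, pseudonatural in all arguments. I would \emph{define} $\gamma_g$ to be this comparison, so that invertibility and pseudonaturality are automatic.

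Next I would identify this canonical comparison with the displayed composite. The comparison coming from uniqueness of adjoints is the mate $L_sL_t \to L_tL_s$ obtained by whiskering the unit of $L_tL_s\dashv R_sR_t$ with the counit of $L_sL_t\dashv R_tR_s$; unwinding these composite (co)units into the individual cells $\tilde s,\tilde t$ and $\sigma,\tau$, and using that the $t$-direction is left untouched throughout, recovers exactly $\sigma_{g^{st}}\circ {\sim}\circ (\tilde s_g)^{ts}$. As a byproduct one sees that $\sigma_{g^{st}}$ is invertible — equivalently that $(-)^t$ carries $s$-strengthened maps to $s$-strengthened maps — a fact that cannot be had from pseudonaturality alone and is precisely what the adjoint-uniqueness argument provides. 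To check the formula agrees with the mate it suffices, since the target $g^{st}$ is a value of the fully faithful $L_s$, to precompose with $i$ in the $s$-slot, where the triangle identity $\sigma_{h^s}\circ(\tilde s_h)^s = 1$ collapses both sides.

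For the coherence conditions I would use throughout the following \emph{restriction principle}: since the common source of each coherence 2-cell is a strengthening $L(a)$ and $L$ is a fully faithful left adjoint, 2-cells out of $L(a)$ biject, via the adjunction $L\dashv R$, with 2-cells out of its restriction $L(a)\circ i$ along the relevant unit. Thus each of conditions (1)--(4) reduces to an equality of 2-cells after precomposing with the appropriate unit map(s), at which point the formula for $\gamma$, the triangle identities of the $s$- and $t$-adjunctions, naturality of the strengthenings, and the relative-pseudomonad identities of Lemma~\ref{param3moreeqs} and Definition~\ref{paramrelpsmonad} make the two sides coincide. Conditions (1) and (2), involving only the units $\tilde s,\tilde t$, fall out immediately from compatibility of the mate with units; and the braiding axiom (5) is in fact formal, since the six strengthenings $f^{uts},\dots,f^{stu}$ are all left adjoint to the single order-independent triple restriction $R_sR_tR_u$, so each edge of the hexagon is a canonical comparison and the hexagon commutes by transitivity of such comparisons.

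The main obstacle I expect is \emph{not} the braiding axiom (which the above renders automatic) but conditions (3) and (4), where $\gamma$ must be shown compatible with the substitution cells $\hat s,\hat t$. These mix the adjoint comparison with the associativity data of the strength, so the restriction argument must be carried out while keeping careful track of the anonymous pseudonaturality isomorphisms and of which slot each $\hat{}$ acts in; the bookkeeping of the composite-adjunction identifications — which left adjoint is adjoint to which iterated restriction — is where the genuine care lies. I would therefore prove (3) and (4) in full by restriction-and-chase and treat (1), (2) and (5) as consequences of the canonical description of $\gamma$.
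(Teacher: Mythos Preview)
Your approach is correct and takes a genuinely different route from the paper's. The paper works entirely by explicit diagram chasing: it writes down the candidate inverse $g^{st}\to g^{ts}$ by swapping the roles of $s$ and $t$, verifies both round-trips are identities by pasting naturality squares and triangle identities, and then checks each of the five axioms (1)--(5) by further explicit pastings (the paper notes that (3),(4) and (5) each reduce to ``a large commuting diagram filled in entirely with naturality squares'' plus at most one triangle identity). Your proposal instead reads off invertibility and pseudonaturality of $\gamma$ from uniqueness of left adjoints to $R_sR_t\cong R_tR_s$, and obtains the braiding hexagon (5) essentially for free by the same transitivity. What your approach buys is conceptual clarity---invertibility of $\sigma_{g^{st}}$, for instance, is no longer something to be chased but a consequence of $(-)^{st}$ and $(-)^{ts}$ having the same essential image---and a genuine shortening of the argument for (1), (2), and especially (5). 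What the paper's approach buys is self-containment: it never needs to verify that the displayed composite coincides with the abstract mate, nor the small auxiliary lemmas (which you will need for (5)) that whiskering a canonical adjoint-comparison by $L_s$ on the left, or by $L_t$ on the right, again yields the canonical comparison between the composite adjoints. You should make those whiskering lemmas explicit. Your estimate that (3) and (4) carry the real bookkeeping is accurate in either approach, though note the paper manages them with naturality squares and a single triangle identity rather than the full strength of Lemma~\ref{param3moreeqs}.
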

\begin{proof}
    To begin, we first show that that putative $\gamma_g$ is invertible. We will show that the composite \[g^{st} \xrightarrow{(\tilde t_g)^{st}} (g^t \circ_t i)^{st} \xrightarrow{\sim} (g^{ts} \circ_t i)^t \xrightarrow{\tau_{g^{ts}}} g^{ts}\] is its inverse. We have the commuting diagram
\[\begin{tikzcd}[ampersand replacement=\&]
	{g^{ts}} \& {(g^s \circ_s i)^{ts}} \& {(g^{st} \circ_s i)^s} \& {g^{st}} \\
	{(g^t \circ_t i)^{ts}} \& {((g^t \circ_t i)^s \circ_s i)^{ts}} \& {((g^t \circ_t i)^{st} \circ_s i)^s} \& {(g^t \circ_t i)^{st}} \\
	\& {((g^{ts} \circ_t i) \circ_s i)^{ts}} \& {((g^{ts} \circ_t i)^t \circ_s i)^s} \& {(g^{ts} \circ_t i)^t} \\
	{((g^{ts} \circ_s i) \circ_t i)^{ts}} \&\& {(g^{ts}\circ_s i)^s} \& {g^{ts}}
	\arrow["{\tilde s}", from=1-1, to=1-2]
	\arrow["\sim", from=1-2, to=1-3]
	\arrow["\sigma", from=1-3, to=1-4]
	\arrow["{\tilde t}", from=1-4, to=2-4]
	\arrow["\sim", from=2-4, to=3-4]
	\arrow["\tau", from=3-4, to=4-4]
	\arrow["{\tilde t}", from=1-3, to=2-3]
	\arrow["\sigma", from=2-3, to=2-4]
	\arrow["{\tilde t}", from=1-2, to=2-2]
	\arrow["\sim", from=2-2, to=2-3]
	\arrow["\sim", from=2-3, to=3-3]
	\arrow["\sigma", from=3-3, to=3-4]
	\arrow["\tau", from=3-3, to=4-3]
	\arrow["\sigma"', from=4-3, to=4-4]
	\arrow["\sim"', from=2-2, to=3-2]
	\arrow["\sim"', from=3-2, to=3-3]
	\arrow["{\tilde t}"', from=1-1, to=2-1]
	\arrow["{\tilde s}", from=2-1, to=2-2]
	\arrow[Rightarrow, no head, from=4-1, to=3-2]
	\arrow["{\tilde s}"', from=2-1, to=4-1]
	\arrow["\tau"', from=4-1, to=4-3]
\end{tikzcd}\] whose clockwise composite is the composite $(\gamma_g)^{-1} \circ \gamma_g$, entirely composed of naturality squares. Then by the following diagram
\[\begin{tikzcd}[ampersand replacement=\&]
	{g^{ts}} \\
	{(g^t \circ_t i)^{ts}} \& {g^{ts}} \\
	{((g^{ts} \circ_s i) \circ_t i)^{ts}} \& {(g^{ts}\circ_s i)^s} \& {g^{ts}}
	\arrow["\sigma"', from=3-2, to=3-3]
	\arrow["{\tilde t}"', from=1-1, to=2-1]
	\arrow["{\tilde s}"', from=2-1, to=3-1]
	\arrow["\tau"', from=3-1, to=3-2]
	\arrow["\tau"', from=2-1, to=2-2]
	\arrow["{\tilde s}"', from=2-2, to=3-2]
	\arrow[Rightarrow, no head, from=1-1, to=2-2]
	\arrow[Rightarrow, no head, from=2-2, to=3-3]
\end{tikzcd}\] composed of a naturality square and two triangle identities, the anticlockwise composite of the first diagram is equal to the identity on $g^{ts}$, as required. The same argument (swapping the roles of $s$ and $t$) demonstrates that the other composite $\gamma_g \circ (\gamma_g)^{-1}$ is also the identity, and so our $\gamma_g$ is indeed invertible.

We now must show that our $\gamma_g$ satisfies the coherence conditions for a pseudocommutativity. For the unit condition
\[\begin{tikzcd}[ampersand replacement=\&]
	{g^s} \& {(g^t\circ_t i)^s} \& {f^{ts}\circ_t i} \\
	\&\& {f^{st}\circ_t i}
	\arrow["{(\tilde t_g)^s}", from=1-1, to=1-2]
	\arrow["\sim", from=1-2, to=1-3]
	\arrow["{\gamma_g\circ_t i}", from=1-3, to=2-3]
	\arrow["{\tilde t_{g^s}}"', from=1-1, to=2-3]
\end{tikzcd}\] we write out $\gamma_g \circ_t i$ in terms of our composite and construct the commuting diagram
\[\begin{tikzcd}[ampersand replacement=\&]
	{g^s} \& {(g^t\circ_t i)^s} \& {g^{ts}\circ_t i} \\
	{(g^s \circ_s i)^s} \& {((g^s \circ_s i)^t\circ_t i)^s} \& {(g^s \circ_s i)^{ts}\circ_t i} \\
	\& {((g^{st} \circ_s i)\circ_t i)^s} \& {(g^{st} \circ_s i)^s\circ_t i} \\
	{((g^{st} \circ_t i)\circ_s i)^s} \&\& {g^{st}\circ_t i}
	\arrow["\sim", from=1-2, to=1-3]
	\arrow["{\tilde t}", from=1-1, to=1-2]
	\arrow["{\tilde s}", from=1-3, to=2-3]
	\arrow["\sim", from=2-3, to=3-3]
	\arrow["\sigma", from=3-3, to=4-3]
	\arrow["{\tilde s}", from=1-2, to=2-2]
	\arrow["\sim", from=2-2, to=2-3]
	\arrow["{\tilde s}"', from=1-1, to=2-1]
	\arrow["{\tilde t}", from=2-1, to=2-2]
	\arrow["\sim", from=2-2, to=3-2]
	\arrow["\sim", from=3-2, to=3-3]
	\arrow[Rightarrow, no head, from=3-2, to=4-1]
	\arrow["\sigma"', from=4-1, to=4-3]
	\arrow["{\tilde t}"', from=2-1, to=4-1]
\end{tikzcd}\] comprising five naturality squares. Then the anticlockwise composite is, by the following commuting diagram
\[\begin{tikzcd}[ampersand replacement=\&]
	{g^s} \\
	{(g^s \circ_s i)^s} \& {g^s} \\
	{((g^{st} \circ_t i)\circ_s i)^s} \& {g^{st}\circ_t i}
	\arrow["{\tilde s}"', from=1-1, to=2-1]
	\arrow["\sigma"', from=3-1, to=3-2]
	\arrow["{\tilde t}"', from=2-1, to=3-1]
	\arrow["\sigma"', from=2-1, to=2-2]
	\arrow[Rightarrow, no head, from=1-1, to=2-2]
	\arrow["{\tilde t}", from=2-2, to=3-2]
\end{tikzcd}\] of a naturality square and a triangle identity, equal to $\tilde t_{g^s}$, as required. The other unit condition is shown by the same argument, swapping the roles of $s$ and $t$.

Now, for the strengthening condition
\[\begin{tikzcd}[ampersand replacement=\&]
	{(f^t\circ_t g)^{ts}} \&\& {(f^t\circ_t g^t)^s} \& {f^{ts}\circ_t g^t} \\
	{(f^t\circ_t g)^{st}} \\
	{(f^{ts}\circ_t g)^t} \&\& {(f^{st}\circ_t g)^t} \& {f^{st}\circ_t g^t}
	\arrow["{\gamma_{f^t\circ_t g}}"', from=1-1, to=2-1]
	\arrow["\sim"', from=2-1, to=3-1]
	\arrow["{\gamma_f\circ_t g^t}", from=1-4, to=3-4]
	\arrow["{(\hat t_{f,g})^s}", from=1-1, to=1-3]
	\arrow["\sim", from=1-3, to=1-4]
	\arrow["{(\gamma_f\circ_t g)^t}"', from=3-1, to=3-3]
	\arrow["{\hat t_{f^s,g}}"', from=3-3, to=3-4]
\end{tikzcd}\] we can write out the anticlockwise composite in terms of our $\gamma$ and construct a large commuting diagram filled in entirely with naturality squares and one triangle identity. The other strengthening condition is shown by the same argument, swapping the roles of $s$ and $t$.

Finally, for the braiding coherence condition
\[\begin{tikzcd}[ampersand replacement=\&]
	{f^{uts}} \&\& {f^{tus}} \&\& {f^{tsu}} \\
	{f^{ust}} \&\& {f^{sut}} \&\& {f^{stu}}
	\arrow["{(\gamma_f)^s}", from=1-1, to=1-3]
	\arrow["{\gamma_{f^t}}", from=1-3, to=1-5]
	\arrow["{(\gamma_f)^u}", from=1-5, to=2-5]
	\arrow["{\gamma_{f^u}}"', from=1-1, to=2-1]
	\arrow["{(\gamma_f)^t}"', from=2-1, to=2-3]
	\arrow["{\gamma_{f^s}}"', from=2-3, to=2-5]
\end{tikzcd}\] after writing each composite in terms of our $\gamma$ we obtain a large diagram that may be filled in entirely with naturality squares. So all five coherence conditions are satisfied and hence indeed our $\gamma$ is a pseudocommutativity for $T$.
\end{proof}

In summary, the previous sections have proved the following implications for $T$ a relative pseudomonad along $J : \D \to \C$ between 2-multicategories:

\begin{itemize}
    \item Every strong relative pseudomonad $T$ is a pseudo-multifunctor (Proposition~\ref{param=>psfunctor}).
    \item Every pseudocommutative relative pseudomonad $T$ is a multicategorical relative pseudomonad (Theorem~\ref{pscom=>multmonad}).
    \item Every lax-idempotent strong relative pseudomonad $T$ is pseudocommutative (Theorem~\ref{laxid=>pscom}).
\end{itemize}

\section{The presheaf relative pseudomonad}

We apply our results to the presheaf construction. As shown in \cite{fghw}, the presheaf construction $\Psh : \Cat \to \CAT : X \mapsto \Psh X := [X^{op},\Set]$ can be given the structure of a relative pseudomonad, where the units are given by the Yoneda embedding $y_X : X \to \Psh X$ and the extension of a functor $f : X \to \Psh Y$ for small categories $X,Y$ is given by the left Kan extension
\[\begin{tikzcd}[ampersand replacement=\&]
	X \& {\Psh X} \\
	\& {\Psh Y}
	\arrow[""{name=0, anchor=center, inner sep=0}, "{f^* := \Lan_y f}", from=1-2, to=2-2]
	\arrow["y", from=1-1, to=1-2]
	\arrow[""{name=1, anchor=center, inner sep=0}, "f"', from=1-1, to=2-2]
	\arrow["{\eta_f}", shorten <=4pt, shorten >=4pt, Rightarrow, from=1, to=0]
\end{tikzcd}\] along the Yoneda embedding, and this diagram also defines the map $\eta_f : f \to f^*i$.

In order to make use of the our results, we need to further show that the presheaf relative pseudomonad is strong.

\begin{proposition}\label{pshisparam}
    The presheaf relative pseudomonad $\Psh$ along $J : \Cat \to \CAT$ is strong, with the strengthening of a functor \[f : B_1,...,B_{j-1},JX,B_{j+1},...,B_n \to \Psh Y\] defined as the left Kan extension 
\[\begin{tikzcd}[ampersand replacement=\&]
	{B_1,...,X,...,B_n} \&\& {B_1,...,\Psh X,...,B_n} \\
	\\
	\&\& {\Psh Y}
	\arrow[""{name=0, anchor=center, inner sep=0}, "{f^t := \Lan_{1,...,y,...,1} f}", from=1-3, to=3-3]
	\arrow["{1,...,y,...,1}", from=1-1, to=1-3]
	\arrow[""{name=1, anchor=center, inner sep=0}, "f"', from=1-1, to=3-3]
	\arrow["{\tilde t_f}", shorten <=11pt, shorten >=11pt, Rightarrow, from=1, to=0]
\end{tikzcd}\] along $1,...,y,...,1$, and the 2-cell in the above diagram defines the map $\tilde t_{f}$.
\end{proposition}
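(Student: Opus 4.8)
The plan is to define every piece of structure via pointwise left Kan extensions and to deduce each coherence condition from the corresponding universal property, mirroring the construction of the unary relative pseudomonad structure of $\Psh$ recalled in Example~\ref{psh ex}. First I would observe that each required Kan extension exists and is pointwise: the codomain $\Psh Y$ is cocomplete, so $f^{t_j} := \Lan_{1,\dots,y,\dots,1} f$ is computed objectwise by a colimit in $\Psh Y$, and this assignment is functorial in $f$, giving the strength functor $(-)^{t_j}$. The unit $\tilde t_f$ is taken to be the universal 2-cell exhibiting $f^{t_j}$ as this Kan extension. To see that $\tilde t_f$ is invertible, as Definition~\ref{paramrelpsmonad} requires, I would use that $1 \times \dots \times y \times \dots \times 1$ is fully faithful—being a product of identity functors with the fully faithful Yoneda embedding—together with the standard fact that the unit of a pointwise left Kan extension along a fully faithful functor is an isomorphism.

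Next I would define $\hat t_{f,g}$ and $\theta_X$ exactly as the 2-cells $\mu$ and $\theta$ were defined for the unary structure in Example~\ref{psh ex}, namely as the unique 2-cells determined by the universal property of the relevant Kan extension; these are then automatically invertible and natural. I would then establish the pseudonaturality of $(-)^{t_j}$ in all arguments. In the strengthened argument this is simply functoriality of the Kan extension. In a non-strengthened argument $k \neq j$, the required isomorphism $(f \circ_k g)^{t_j} \cong f^{t_j} \circ_k g$ follows from the pointwise colimit formula: the colimit defining the extension ranges only over the strengthened variable, so it commutes with precomposition by $g$ in the disjoint $k$-th variable.

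Finally I would verify the two coherence conditions (1) and (2) of Definition~\ref{paramrelpsmonad}. The key observation is that both sides of each equation are 2-cells whose source is a strengthening, hence a left Kan extension, so by the universal property it suffices to check that they agree after restriction along the unit $1,\dots,y,\dots,1$. Upon such restriction each side collapses to a pasting of the defining 2-cells $\tilde t$, $\hat t$, $\theta$ together with the anonymous pseudonaturality isomorphisms, and the equality then reduces to the defining universal properties of $\hat t$ and $\theta$. This is precisely the bookkeeping that establishes the unary axioms for $\Psh$ in \cite{fghw}, carried out with strengthenings in place of extensions.

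The main obstacle I anticipate is not any single coherence check—each reduces routinely via the universal property—but rather the careful setup of the multi-argument naturality isomorphisms and their coherence: establishing that $\Lan_{1,\dots,y,\dots,1}$ behaves as a genuinely one-variable operation commuting with precomposition in the remaining arguments, and keeping track of the whiskered pseudonaturality cells across the various arities. This is the one genuinely multicategorical ingredient with no counterpart in the unary construction of \cite{fghw}, and it is where the bulk of the care is needed.
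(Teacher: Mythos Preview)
Your proposal is correct and follows essentially the same approach as the paper: define $\hat t$ and $\theta$ via the universal property of the left Kan extension, then verify the two coherence axioms by restricting along the unit and reducing to naturality squares. You are in fact more thorough than the paper on a few points it leaves implicit---existence of the pointwise Kan extension via cocompleteness of $\Psh Y$, invertibility of $\tilde t_f$ via full faithfulness of $1,\dots,y,\dots,1$, and the pseudonaturality isomorphisms in the non-strengthened arguments---but the strategy is identical.
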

\begin{proof}
    We begin by constructing the rest of the data for a strong relative pseudomonad; namely, the invertible families of 2-cells
    \[\hat t_{f,g} : (f^t \circ_t g)^t \to f^t \circ g^t,\ \theta : i^t \to 1.\]
    Using the universal property of the left Kan extension, we define $\hat t_{f,g}$ and $\theta$ to be the unique 2-cells such that
\[\begin{tikzcd}[ampersand replacement=\&]
	{f^t \circ_t g} \& {(f^t \circ_t g)^t \circ_t y} \& y \& {y^t \circ y} \\
	\& {(f^t \circ_t g)^t \circ_t y} \&\& y
	\arrow["{\hat t_{f,g} \circ_t y}", from=1-2, to=2-2]
	\arrow["{\tilde t_{f^t \circ_t g}}", from=1-1, to=1-2]
	\arrow["{f^t \circ_t\tilde t_g}"', from=1-1, to=2-2]
	\arrow["{\theta \circ y}", from=1-4, to=2-4]
	\arrow["{\tilde t_y}", from=1-3, to=1-4]
	\arrow[Rightarrow, no head, from=1-3, to=2-4]
    \end{tikzcd}\] commute, respectively. It remains to check the two coherence conditions of Definition~\ref{paramrelpsmonad}. For the first:
\[\begin{tikzcd}[ampersand replacement=\&]
	{((f^t \circ_t g)^t \circ_t h)^t} \&\& {(f^t \circ_t g)^t \circ_t h^t} \\
	{(f^t \circ_t g^t \circ_t h)^t} \& {f^t \circ_t (g^t \circ_t h)^t} \& {(f^t \circ_t g^t) \circ_t h^t}
	\arrow["{(\hat t_{f,g} \circ_t h)^t}"', from=1-1, to=2-1]
	\arrow["{\hat t_{f^t \circ_t g,h}}", from=1-1, to=1-3]
	\arrow["{\hat t_{f,g} \circ_t h^t}", from=1-3, to=2-3]
	\arrow["{\hat t_{f,g^t \circ_t h}}"', from=2-1, to=2-2]
	\arrow["{f^t \circ_t \hat t_{g,h}}"', from=2-2, to=2-3]
\end{tikzcd}\]  by the universal property of the left Kan extension it suffices to show that the diagram
\begin{small}
\[\begin{tikzcd}[ampersand replacement=\&]
	{((f^t \circ_t g)^t \circ_t h)^t \circ_t y} \&\&\&\& {(f^t \circ_t g)^t \circ_t h^t\circ_t y} \\
	{(f^t \circ_t g^t \circ_t h)^t\circ_t y} \&\& {f^t \circ_t (g^t \circ_t h)^t\circ_t y} \&\& {(f^t \circ_t g^t) \circ_t h^t\circ_t y}
	\arrow["{(\hat t_{f,g} \circ_t h)^t\circ_t y}"', from=1-1, to=2-1]
	\arrow["{\hat t_{f^t \circ_t g,h}\circ_t y}", from=1-1, to=1-5]
	\arrow["{\hat t_{f,g} \circ_t h^t\circ_t y}", from=1-5, to=2-5]
	\arrow["{\hat t_{f,g^t \circ_t h}\circ_t y}"', from=2-1, to=2-3]
	\arrow["{f^t \circ_t \hat t_{g,h}\circ_t y}"', from=2-3, to=2-5]
\end{tikzcd}\]
\end{small} commutes. Rewriting terms we obtain the diagram
\[\begin{tikzcd}[ampersand replacement=\&]
	{((f^t \circ_t g)^t \circ_t h)^t \circ_t y} \& {(f^t \circ_t g)^t \circ_t h} \& {(f^t \circ_t g)^t \circ_t h^t\circ_t y} \\
	{(f^t \circ_t g^t \circ_t h)^t\circ_t y} \&\& {(f^t \circ_t g^t) \circ_t h^t\circ_t y} \\
	{f^t \circ_t g^t \circ_t h} \& {f^t \circ_t (g^t \circ_t h)^t\circ_t y} \& {f^t \circ_t g^t \circ_t h}
	\arrow["{\hat t}"', from=1-1, to=2-1]
	\arrow["{\hat t}", from=1-3, to=2-3]
	\arrow["{\tilde t^{-1}}", from=1-1, to=1-2]
	\arrow["{\tilde t}", from=1-2, to=1-3]
	\arrow["{\tilde t^{-1}}"', from=2-1, to=3-1]
	\arrow["{\tilde t}"', from=3-1, to=3-2]
	\arrow["{\tilde t^{-1}}"', from=3-2, to=3-3]
	\arrow["{\tilde t}"', from=3-3, to=2-3]
\end{tikzcd}\] which we can fill in
\[\begin{tikzcd}[ampersand replacement=\&]
	{((f^t \circ_t g)^t \circ_t h)^t \circ_t y} \& {(f^t \circ_t g)^t \circ_t h} \& {(f^t \circ_t g)^t \circ_t h^t\circ_t y} \\
	{(f^t \circ_t g^t \circ_t h)^t\circ_t y} \& {f^t \circ_t g^t \circ_t h} \& {(f^t \circ_t g^t) \circ_t h^t\circ_t y} \\
	\& {f^t \circ_t (g^t \circ_t h)^t\circ_t y}
	\arrow["{\hat t}"', from=1-1, to=2-1]
	\arrow["{\hat t}", from=1-3, to=2-3]
	\arrow["{\tilde t^{-1}}", from=1-1, to=1-2]
	\arrow["{\tilde t^{-1}}"', from=2-1, to=2-2]
	\arrow["{\tilde t}"', curve={height=18pt}, from=2-2, to=3-2]
	\arrow["{\hat t}"', from=1-2, to=2-2]
	\arrow["{\tilde t^{-1}}"', curve={height=18pt}, from=3-2, to=2-2]
	\arrow["{\tilde t}", from=1-2, to=1-3]
	\arrow["{\tilde t}"', from=2-2, to=2-3]
\end{tikzcd}\] with two naturality squares. For the second:
\[\begin{tikzcd}[ampersand replacement=\&]
	{f^t} \& {(f^t \circ_t y)^t} \& {f^t \circ_t y^t} \\
	\&\& {f^t}
	\arrow["{(\tilde t_f)^t}", from=1-1, to=1-2]
	\arrow["{\hat t_{f,y}}", from=1-2, to=1-3]
	\arrow["{f^t \circ_t \theta}", from=1-3, to=2-3]
	\arrow[Rightarrow, no head, from=1-1, to=2-3]
\end{tikzcd}\] again by the universal property of the left Kan extension we can equivalently show the diagram
\[\begin{tikzcd}[ampersand replacement=\&]
	{f^t \circ_t y} \&\& {(f^t \circ_t y)^t\circ_t y} \&\& {f^t \circ_t y^t\circ_t y} \\
	\&\&\&\& {f^t\circ_t y}
	\arrow["{(\tilde t_f)^t\circ_t y}", from=1-1, to=1-3]
	\arrow["{\hat t_{f,y}\circ_t y}", from=1-3, to=1-5]
	\arrow["{f^t \circ_t \theta\circ_t y}", from=1-5, to=2-5]
	\arrow[Rightarrow, no head, from=1-1, to=2-5]
\end{tikzcd}\] commutes. Rewriting terms we obtain
\[\begin{tikzcd}[ampersand replacement=\&]
	{f^t \circ_t y} \&\& {(f^t \circ_t y)^t\circ_t y} \& {f^t \circ_t y} \& {f^t \circ_t y^t\circ_t y} \\
	\&\&\&\& {f^t\circ_t y}
	\arrow[Rightarrow, no head, from=1-1, to=2-5]
	\arrow["{\tilde t}", from=1-1, to=1-3]
	\arrow["{\tilde t^{-1}}", from=1-3, to=1-4]
	\arrow["{\tilde t}", from=1-4, to=1-5]
	\arrow["{\tilde t^{-1}}", from=1-5, to=2-5]
\end{tikzcd}\] which immediately commutes. Hence indeed $\Psh$ is as constructed a strong relative pseudomonad.
\end{proof}

We can now apply the results of this paper to the presheaf relative pseudomonad.

\begin{theorem}\label{psh is all the things}
    The presheaf relative pseudomonad is:
    \begin{enumerate}
        \item[(1)] a lax-idempotent strong relative pseudomonad,
        \item[(2)] a pseudocommutative relative pseudomonad, and
        \item[(3)] a multicategorical relative pseudomonad.
    \end{enumerate}
\end{theorem}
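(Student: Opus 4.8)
The plan is to prove the three claims in order, chaining together the implications established earlier in the paper; the only part requiring genuine verification is (1), after which (2) and (3) follow formally.

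For (1), I would start from Proposition~\ref{pshisparam}, which already equips $\Psh$ with the structure of a strong relative pseudomonad whose strengthening of $f : B_1,\dots,JX,\dots,B_n \to \Psh Y$ is the left Kan extension $f^{t} := \Lan_{1,\dots,y,\dots,1} f$, with $\tilde t_f$ the universal 2-cell. To see that this strong structure is moreover lax-idempotent, I would invoke the Kan-extension reformulation of lax idempotency given immediately after the definition of a lax-idempotent strong relative pseudomonad: it suffices to check that $\tilde t_f$ exhibits $f^{t_j}$ as the left Kan extension of $f$ along $1,\dots,y,\dots,1$. But this is exactly how $f^{t_j}$ is defined in Proposition~\ref{pshisparam}, so the condition holds by construction. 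Concretely, the functor $\Lan_{1,\dots,y,\dots,1}(-)$ is, by the universal property of left Kan extensions, left adjoint to restriction $(-)\circ_j y$, and the unit of this adjunction is precisely $\tilde t$; since the Yoneda embedding $y$ is fully faithful, this unit is invertible, which is the remaining requirement in the definition.

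With (1) in hand, (2) is immediate from Theorem~\ref{laxid=>pscom}: every lax-idempotent strong relative pseudomonad is pseudocommutative, so $\Psh$ is pseudocommutative, with pseudocommutativity $\gamma$ given by the explicit formula of that theorem. Likewise (3) follows from Theorem~\ref{pscom=>multmonad}, once we observe that the inclusion $J : \Cat \to \CAT$ is a strict multicategorical 2-functor (it preserves the cartesian multicategorical structure on the nose), so the hypotheses of that theorem are met and $\Psh$ is a multicategorical relative pseudomonad.

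I expect the only real obstacle to be the bookkeeping in (1), namely confirming that the left Kan extensions $\Lan_{1,\dots,y,\dots,1} f$ genuinely exist for all $n$-ary $f$ and assemble into the claimed adjunction pseudonaturally in every argument. This existence is underwritten by the coend formula of Example~\ref{psh is multi psfun}; once existence and the standard adjunction between left Kan extension and restriction are granted, invertibility of the unit is forced by full faithfulness of $y$, and nothing further about pseudocommutativity or multicategoricality need be checked by hand, since those have been reduced to (1) via the two cited theorems.
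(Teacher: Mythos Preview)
Your proposal is correct and follows essentially the same route as the paper: reduce (2) and (3) to (1) via Theorems~\ref{laxid=>pscom} and~\ref{pscom=>multmonad}, then observe that the strong structure on $\Psh$ from Proposition~\ref{pshisparam} is defined by left Kan extension, which is precisely the Kan-extension characterisation of lax idempotency. The extra remarks you add (full faithfulness of $y$ forcing invertibility of the unit, and $J$ being a strict multicategorical 2-functor) are fine embellishments but not needed beyond what the paper uses.
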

\begin{proof}
    By Theorem~\ref{laxid=>pscom} we know $(1) \implies (2)$, and by Theorem~\ref{pscom=>multmonad} we know $(2) \implies (3)$. So it suffices to check that $\Psh$ is lax-idempotent strong. By Proposition~\ref{pshisparam} $\Psh$ is strong, and we have diagrams
    \[\begin{tikzcd}[ampersand replacement=\&]
	{B_1,...,X,...,B_n} \&\& {B_1,...,\Psh X,...,B_n} \\
	\\
	\&\& {\Psh Y}
	\arrow[""{name=0, anchor=center, inner sep=0}, "{f^t := \Lan_{1,...,y,...,1} f}", from=1-3, to=3-3]
	\arrow["{1,...,y,...,1}", from=1-1, to=1-3]
	\arrow[""{name=1, anchor=center, inner sep=0}, "f"', from=1-1, to=3-3]
	\arrow["{\tilde t_f}", shorten <=11pt, shorten >=11pt, Rightarrow, from=1, to=0]
\end{tikzcd}\] exhibiting $f^t$ as the left Kan extension of $f$ along $1,...,y,...,1$. But this means precisely that we have an adjunction \[(-)^{t} \dashv - \circ_t y\] whose unit is $\tilde t$, as required. So indeed $\Psh$ is lax-idempotent strong, and hence also pseudocommutative and a multicategorical relative pseudomonad.
\end{proof}

\begin{small}
\noindent
    \textbf{Acknowledgements.}
    The author gives thanks for the support of the Engineering and Physical Sciences Research Council, which has funded the author's position as a post-graduate researcher at the University of Leeds. Personal thanks are given to Nicola Gambino for regular invaluable discussions, as well as to Nathanael Arkor for useful conversations.
\end{small}

\bibliographystyle{alpha}
\bibliography{main} 

\begin{thebibliography}{FGHW18}

\bibitem[ACU15]{acu2015}
Thosten Altenkirch, James Chapman, and Tarmo Uustalu.
\newblock {Monads need not be endofunctors}.
\newblock {\em {Logical Methods in Computer Science}}, 11(1), 2015.

\bibitem[AM23]{arkormcdermott2023}
Nathanael Arkor and Dylan McDermott.
\newblock {The formal theory of relative monads}, 2023.
\newblock https://arxiv.org/abs/2302.14014.

\bibitem[BKP89]{bkp1989}
Robert Blackwell, Gregory~M. Kelly, and A.~John Power.
\newblock {Two-dimensional monad theory}.
\newblock {\em Journal of Pure and Applied Algebra}, 59(1):1--41, 1989.

\bibitem[Bun74]{bunge74}
Marta Bunge.
\newblock {Coherent extensions and relational algebras}.
\newblock {\em Trans. Am. Math. Soc.}, 197, 1974.

\bibitem[DS97]{day1997}
Brian Day and Ross Street.
\newblock {Monoidal Bicategories and Hopf Algebroids}.
\newblock {\em Advances in Mathematics}, 129(1):99--157, 1997.

\bibitem[FGHW18]{fghw}
Marcelo Fiore, Nicola Gambino, Martin Hyland, and Glynn Winskel.
\newblock Relative pseudomonads, {K}leisli bicategories, and substitution monoidal structures.
\newblock {\em Selecta Mathematica}, 24(3):2791–2830, 2018.

\bibitem[GL21]{ng21}
Nicola Gambino and Gabriele Lobbia.
\newblock On the formal theory of pseudomonads and pseudodistributive laws.
\newblock {\em Theory and Applications of Categories}, 37(2):14--56, January 2021.
\newblock {\copyright} Nicola Gambino and Gabriele Lobbia, 2021. Permission to copy for private use granted. This is an author produced version of a paper published in Theory and Applications of Categories. Uploaded in accordance with the publisher's self-archiving policy.

\bibitem[Her00]{hermida2000}
Claudio Hermida.
\newblock {Representable Multicategories}.
\newblock {\em Advances in Mathematics}, 151(2):164--225, 2000.

\bibitem[HP02]{hp2002}
Martin Hyland and John Power.
\newblock {Pseudo-commutative monads and pseudo-closed 2-categories}.
\newblock {\em Journal of Pure and Applied Algebra}, 175(1):141--185, 2002.

\bibitem[Kel64]{kelly1964}
Gregory~M. Kelly.
\newblock {On MacLane's conditions for coherence of natural associativities, commutativities, etc.}
\newblock {\em Journal of Algebra}, 1:397--402, 1964.

\bibitem[Kel74]{kelly74}
Gregory~M. Kelly.
\newblock Coherence theorems for lax algebras and for distributive laws.
\newblock In {\em Category Seminar}, volume 420 of {\em Lecture Notes in Mathematics}, pages 281--375, Berlin, Heidelberg, 1974. Springer Berlin Heidelberg.

\bibitem[Koc70]{kock1970}
Anders Kock.
\newblock Monads on symmetric monoidal closed categories.
\newblock {\em Archiv der Mathematik}, 21:1--10, 1970.

\bibitem[Koc95]{kock1995}
Anders Kock.
\newblock {Monads for which structures are adjoint to units}.
\newblock {\em Journal of Pure and Applied Algebra}, 104(1):41--59, 1995.

\bibitem[Lac00]{lack00}
Stephen Lack.
\newblock {A Coherent Approach to Pseudomonads}.
\newblock {\em Advances in Mathematics}, 152(2):179--202, 2000.

\bibitem[LLS23]{diliberti2023kzpseudomonads}
Ivan~Di Liberti, Gabriele Lobbia, and Lurdes Sousa.
\newblock Kz-pseudomonads and kan injectivity, 2023.

\bibitem[Lob23]{Lobbia2023}
Gabriele Lobbia.
\newblock Distributive laws for relative monads.
\newblock {\em Applied Categorical Structures}, 31(2):19, Apr 2023.

\bibitem[{Ló}11]{lopezfranco2011}
Ignacio {López Franco}.
\newblock {Pseudo-commutativity of KZ 2-monads}.
\newblock {\em Advances in Mathematics}, 228(5):2557--2605, 2011.

\bibitem[Mar97]{marm1997}
Francisco Marmolejo.
\newblock {Doctrines whose structure forms a fully faithful adjoint string}.
\newblock {\em Theory and Applications of Categories}, 3:24--44, 1997.

\bibitem[Mar99]{marmolejo99}
Francisco Marmolejo.
\newblock {Distributive laws for pseudomonads.}
\newblock {\em Theory and Applications of Categories}, 5:91--147, 1999.

\bibitem[PCW00]{pcw98}
A.~John Power, Gian~Luca Cattani, and Glynn Winskel.
\newblock {A Representation Result for Free Cocompletions}.
\newblock {\em Journal of Pure and Applied Algebra}, 151(3):273--286, 2000.

\bibitem[PS23]{paquetsaville23}
Hugo Paquet and Philip Saville.
\newblock {Strong pseudomonads and premonoidal bicategories}, 2023.
\newblock https://arxiv.org/abs/2304.11014.

\bibitem[Uus10]{uustalu10}
Tarmo Uustalu.
\newblock {Strong Relative Monads (Extended Abstract)}, 2010.
\newblock \url{https://cs.ioc.ee/~tarmo/papers/uustalu-cmcs10short.pdf}.

\end{thebibliography}

\end{document}